\documentclass[12pt]{amsart}
\usepackage{latexsym,amssymb,amsmath}
\usepackage{latexsym,amssymb,amsmath,tikz,verbatim,cancel}

\textwidth=16.00cm
\textheight=22.00cm
\topmargin=0.00cm
\oddsidemargin=0.00cm
\evensidemargin=0.00cm
\headheight=0cm
\headsep=1cm
\headsep=0.5cm 
\numberwithin{equation}{section}
\hyphenation{semi-stable}
\setlength{\parskip}{3pt}

\newtheorem{theorem}{Theorem}[section]
\newtheorem{lemma}[theorem]{Lemma}

\newtheorem{corollary}[theorem]{Corollary}

\theoremstyle{definition}
\newtheorem{definition}[theorem]{Definition}

\newtheorem{remark}[theorem]{Remark}
\newtheorem{example}[theorem]{Example}


\begin{document}

\title{Virtual resolutions of points in $\mathbb{P}^1 \times \mathbb{P}^1$}

\author[M. Harada]{Megumi Harada}
\address{Department of Mathematics and Statistics\\
McMaster University, Hamilton, ON, L8S 4L8, Canada}
\email{megumi.harada@math.mcmaster.ca}

\author[M. Nowroozi]{Maryam Nowroozi}
\address{Department of Mathematics and Statistics\\
McMaster University, Hamilton, ON, L8S 4L8, Canada}
\email{nowroozm@mcmaster.ca}

 \author[A. Van Tuyl]{Adam Van Tuyl}
\address{Department of Mathematics and Statistics\\
McMaster University, Hamilton, ON, L8S 4L8, Canada}
\email{vantuyl@math.mcmaster.ca}

\keywords{virtual resolutions, minimal graded resolutions, points,
multi-projective spaces} 
\subjclass[2000]{Primary: 13D02; Secondary: 14M05, 14M25}
\date{\today}

\begin{abstract}
We  explore explicit virtual resolutions, 
as introduced by 
Berkesch, Erman, and Smith, for ideals of 
finite sets of points in $\mathbb{P}^1 \times \mathbb{P}^1$.  
Specifically, we describe a virtual resolution for a sufficiently 
general set of points $X$ in
$\mathbb{P}^1 \times \mathbb{P}^1$ that
only depends on $|X|$.  We also improve an existence 
result of Berkesch, Erman, and Smith in the special case of points
in  $\mathbb{P}^1 \times \mathbb{P}^1$;  more precisely, 
we give an effective bound for their construction 
that gives a virtual resolution of length
two for any set of points in $\mathbb{P}^1 \times \mathbb{P}^1$.
\end{abstract}

\maketitle


\section{Introduction}

Minimal free resolutions of ideals in the coordinate ring of a toric variety are well-studied in commutative algebra and algebraic geometry as a method of obtaining information about the corresponding subvariety. It has been observed, however, that minimal free resolutions can be longer than necessary in order to obtain useful information. The theory of virtual resolutions was introduced by Berkesch, Erman, and Smith \cite{Virtual} as a useful analogue to the theory of minimal free resolutions which often provide \emph{shorter} resolutions. In this sense, virtual resolutions are a useful new tool to study varieties in toric varieties, including multi-projective spaces. 

The theory of virtual resolutions is quite new, so there are abundant open questions, and a lack of (families of) concrete examples.  
 On the other hand, since their introduction in \cite{Virtual}, they have been actively studied. In their original work \cite{Virtual}, Berkesch, Erman, and Smith gave multiple strategies for constructing virtual resolutions.  Loper \cite{Loper2} identified two algebraic conditions that characterize when a chain complex is virtual. Kennedy \cite{Jarvis} gave an
 equivalent algebraic definition for a complex to be
 a virtual resolution.
 The concept of being virtually Cohen-Macaulay,
 introduced in \cite{Virtual}, is
 studied in the context of simplicial
 complexes   by Kenshur, Lin, McNally, Xu, and Yu \cite{KLMXY}, and more generally
 in the recent paper of  Berkesch, Klein, Loper, and Yang \cite{Berkesch}.  Yang \cite{Yang}  also studied virtual
 resolutions of monomial ideals on toric varieties. More recently,
 Booms-Peot and Cobb \cite{Booms} have examined when a 
 generalized Eagon-Northcott complex is a virtual resolution.
 
 In this paper we focus on virtual resolutions of finite sets of points in $\mathbb{P}^1 \times
\mathbb{P}^1$.  By focusing on this family, we can take advantage of many
known algebraic and geometric properties about them, e.g., as 
developed in \cite{FGM,GMR2,GMR1,G,Adam}.  At the same time,
we complement Gao, Li, Loper, and Matto's paper \cite{Loper2} which studies points
in $\mathbb{P}^1 \times \mathbb{P}^1$ that
are virtual complete intersections.
The contribution of this manuscript 
is two-fold, as we now describe.

Our first main result gives explicit constructions of a family of virtual resolutions of length two for 
a set $X$ of points in sufficiently
general position (as defined in Section~\ref{backgroundsection}) in $\mathbb{P}^1 \times \mathbb{P}^1$. Note that  we have suppressed the virtual resolution in the statement below, but the complete details can be found in Section 3.

\begin{theorem}[Theorem \ref{maintheoremtrim}]
Let $X$ be a finite set of points in 
$\mathbb{P}^1 \times \mathbb{P}^1$ in sufficiently general
position. Assume $\lvert X \rvert > 1$ and suppose that
there exists an $(i,j) \in \mathbb{N}^2$ with $i < j$ such that
$|X| = (i+1)(j+1)$.   Then there is a virtual resolution of $S/I_X$ of length two which can be explicitly described in terms of $i$ and $j$.
\end{theorem}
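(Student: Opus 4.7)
The plan is to construct an explicit length-two complex of free bigraded $S$-modules whose cohomology recovers $S/I_X$ up to $B$-saturation, where $B = (x_0,x_1)\cap(y_0,y_1)$ is the irrelevant ideal of $\mathbb{P}^1\times\mathbb{P}^1$. The sufficient generality of $X$ gives the Hilbert function
$$\dim_k (I_X)_{(a,b)} \;=\; \max\bigl\{0,\,(a+1)(b+1) - (i+1)(j+1)\bigr\},$$
so the first bidegrees where $I_X$ carries nonzero forms are $(i, j+1)$ and $(i+1, j)$, of dimensions $i+1$ and $j+1$ respectively, and $\dim (I_X)_{(i+1, j+1)} = i+j+3$.

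Fix bases $\{f_1, \ldots, f_{i+1}\} \subset (I_X)_{(i, j+1)}$ and $\{g_1, \ldots, g_{j+1}\} \subset (I_X)_{(i+1, j)}$, and consider the map
$$\phi_1 \colon F_1 \;:=\; S(-i,\,-(j+1))^{\,i+1} \;\oplus\; S(-(i+1),\,-j)^{\,j+1} \;\longrightarrow\; S$$
sending the natural generators to these forms. The $2(i+1) + 2(j+1)$ products $x_m f_k$ and $y_n g_\ell$ all lie in $(I_X)_{(i+1, j+1)}$, so by a dimension count one expects exactly $i+j+1$ independent linear relations; genericity will be used to confirm that the multiplication map in question is surjective. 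These relations give a map
$$\phi_2 \colon F_2 \;:=\; S(-(i+1),\,-(j+1))^{\,i+j+1} \;\longrightarrow\; F_1$$
whose matrix entries are linear forms in the $x_m$ and $y_n$.

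The main claim is that $0 \to F_2 \xrightarrow{\phi_2} F_1 \xrightarrow{\phi_1} S$ is a virtual resolution of $S/I_X$. A direct Euler-characteristic computation shows that, in bidegree $(a, b)$ with $a \geq i+1$ and $b \geq j+1$, the alternating sum of ranks evaluates to $(i+1)(j+1)$, matching the Hilbert polynomial of $S/I_X$. It remains to verify: (a) $\operatorname{image}(\phi_1)$ and $I_X$ agree after $B$-saturation, which reduces to checking that every element of $I_X$ in sufficiently high bidegree is an $S$-combination of the $f_k$ and $g_\ell$; and (b) $\ker \phi_1$ is generated by $\operatorname{image}(\phi_2)$ modulo $B$-torsion. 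The main obstacle is part (b): for generic points in $\mathbb{P}^1 \times \mathbb{P}^1$ the minimal free resolution of $I_X$ typically has length three, so one must argue that every higher syzygy either pulls back from the $i+j+1$ linear syzygies encoded by $\phi_2$ via $S$-multiplication, or else is annihilated by some power of $B$ and so may be dropped in passing to a virtual resolution.
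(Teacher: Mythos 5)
Your approach diverges from the paper's in a fundamental way. The paper applies the Berkesch--Erman--Smith trimming theorem (Theorem~\ref{theorem:trim}): genericity gives $(i,j) \in \mathrm{reg}_B(S/I_X)$ by Maclagan--Smith, so the subcomplex of the \emph{minimal} free resolution consisting of summands in degree $\preceq (i+1,j+1)$ is automatically a virtual resolution; the remaining work, done via $\Delta^2 H_X$ and the Giuffrida--Maggioni--Ragusa invariants $\alpha_{r,s}, \beta_{r,s}, \gamma_{r,s}$, is purely to identify which summands survive the trim. You instead build a complex from scratch and try to verify the virtual resolution conditions of Definition~\ref{virtualdefn} by hand.

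There is a genuine gap, and in fact an error. Your claim that ``the first bidegrees where $I_X$ carries nonzero forms are $(i, j+1)$ and $(i+1, j)$'' is false whenever $j > i+1$. Writing $|X| = (i+2)q+r$ with $0 \leq r < i+2$, one has $q < j$ exactly when $j > i+1$, and then $\dim_{\Bbbk}(I_X)_{(i+1,q)} = (i+2)(q+1) - |X| = i+2-r > 0$, with $(i+1,q) \prec (i+1,j)$. For instance, with $|X|=8$ and $(i,j)=(1,3)$ one gets $q=2$, $r=2$, and $\dim_{\Bbbk}(I_X)_{(2,2)}=1 \neq 0$. Accordingly, the trimmed resolution's middle term is $S(-i,-j-1)^{i+1} \oplus S(-i-1,-q)^{i+2-r} \oplus S(-i-1,-q-1)^{r}$, not your $S(-i,-j-1)^{i+1} \oplus S(-(i+1),-j)^{j+1}$; the two agree only in the degenerate case $j=i+1$. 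Your complex is therefore not a subcomplex of the minimal free resolution, so Theorem~\ref{theorem:trim} does not apply to it, and you would have to verify directly that the multiplication into bidegree $(i+1,j+1)$ is surjective, that $\ker\phi_2 = 0$ (a nonzero kernel would be torsion-free and hence never $B$-torsion), and that $\ker\phi_1/\operatorname{im}\phi_2$ is annihilated by a power of $B$. You flag (b) as ``the main obstacle'' but do not resolve it; these checks are precisely what the trimming theorem spares the paper from doing, which is why the paper's route is both correct and substantially shorter.
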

\noindent
We note that, in general, the minimal free resolutions of such $X$ are known to have length three, so our result shows that we can shorten the resolution if we allow for virtual resolutions. The main tool in our construction is a ``trimming'' result of Berkesch, Erman, and Smith \cite[Theorem 1.3]{Virtual} (stated below as Theorem~\ref{theorem:trim}), which roughly states that well-chosen (graded) subcomplexes of a (graded) minimal free resolution $\mathcal{F}_{\bullet}$ of $S/I_X$ is in fact a virtual resolution of $S/I_X$. We also employ results of
Giuffrida, Maggioni, and Ragusa \cite{GMR2,GMR1} which gives concrete information about the second-difference function $\Delta^2 H_X$ (defined in Section~\ref{backgroundsection}) of the Hilbert function and its relation to the minimal free resolution $\mathcal{F}_{\bullet}$ of $S/I_X$. This information
is used to prove that for certain choices of these graded subcomplexes, the resulting virtual resolution is of length two. 

Our second main result improves on another result of Berkesch, Erman, and Smith (in the special case of points in $\mathbb{P}^1 \times \mathbb{P}^1$), in the following sense. In addition to the ``trimming'' result mentioned above, Berkesch, Erman, and Smith give another method for constructing virtual resolutions in \cite[Theorem 4.1]{Virtual}.  Let $S = \Bbbk[x_0, x_1, y_0, y_1]$ be the bihomogeneous coordinate ring of $\mathbb{P}^1 \times \mathbb{P}^1$. In the case of $\mathbb{P}^1 \times \mathbb{P}^1$, their theorem states that for some positive integer $a$, the minimal free resolution of  $S/(I_X \cap \langle x_0, x_1 \rangle^a)$ is a virtual resolution of length two
of the original ring $S/I_X$. Their result is an existence result in the sense that they do not give a concrete value for the positive integer $a$. Our contribution is to give an explicit lower bound for the value of $a$ 
for any set of points $X$ in $\mathbb{P}^1 \times \mathbb{P}^1$.  More precisely, let $\pi_1(X)$  denote the set of distinct first coordinates which appear in the points in $X$. Then we show:

\begin{theorem}[Theorem \ref{maintheorem2}]
Let $X$ be a finite set of points in $\mathbb{P}^1 \times
\mathbb{P}^1$.  Then for all $a \geq |\pi_1(X)|-1$, the 
minimal free resolution of 
$S/(I_X \cap \langle x_0,x_1 \rangle^a)$ is a virtual
resolution of $S/I_X$ of length two.
\end{theorem}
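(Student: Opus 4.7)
The plan is to prove two things: (i) the minimal free resolution of $S/J$, where $J := I_X \cap \langle x_0,x_1 \rangle^a$, is a virtual resolution of $S/I_X$; and (ii) this resolution has length exactly $2$ provided $a \geq |\pi_1(X)| - 1$.

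For (i), I would let $B = \langle x_0,x_1 \rangle \cap \langle y_0,y_1 \rangle$ be the irrelevant ideal of $\mathbb{P}^1 \times \mathbb{P}^1$. Since $B \subseteq \langle x_0,x_1 \rangle$, a direct check gives $\langle x_0,x_1 \rangle^a : B^\infty = S$. Combining this with the $B$-saturatedness of $I_X$ and the general identity $(I_1 \cap I_2) : B^\infty = (I_1 : B^\infty) \cap (I_2 : B^\infty)$ yields $J : B^\infty = I_X \cap S = I_X$. Hence $S/J$ and $S/I_X$ sheafify to the same sheaf $\mathcal{O}_X$ on $\mathbb{P}^1 \times \mathbb{P}^1$, so the minimal free resolution of $S/J$ sheafifies to a locally free resolution of $\mathcal{O}_X$, which is exactly the defining property of a virtual resolution of $S/I_X$.

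For (ii), by Auslander--Buchsbaum it suffices to establish $\mathrm{depth}_S(S/J) \geq 2$, since $\mathrm{depth}(S/J) \leq \dim(S/J) = 2$ is automatic. From the inclusion $J \subseteq \langle x_0,x_1 \rangle^a$ one extracts the short exact sequence
\[ 0 \longrightarrow N \longrightarrow S/J \longrightarrow S/\langle x_0,x_1 \rangle^a \longrightarrow 0, \]
with $N := \langle x_0,x_1 \rangle^a / J$. The second isomorphism theorem, combined with the observation that every bihomogeneous polynomial of $x$-degree at least $a$ lies automatically in $\langle x_0,x_1 \rangle^a$, identifies $N$ with the $x$-degree truncation
\[ N \;\cong\; \bigoplus_{d_1 \geq a,\; d_2 \geq 0} (S/I_X)_{(d_1,d_2)}. \]
Since $S/\langle x_0,x_1 \rangle^a$ is Cohen--Macaulay of depth $2$ (with regular sequence $y_0, y_1$), the depth lemma yields $\mathrm{depth}(S/J) \geq \min(\mathrm{depth}(N),\,2)$, reducing the task to proving $\mathrm{depth}(N) \geq 2$ when $a \geq |\pi_1(X)|-1$.

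The hardest step is this final depth bound. By the graded depth criterion it is equivalent to the vanishing of $H^0_{\mathfrak{m}}(N)$ and $H^1_{\mathfrak{m}}(N)$. The first is immediate since $N$ embeds into $S/I_X$, and $I_X$ is $\mathfrak{m}$-saturated because its associated primes are the height-two primes $I_P$ for $P \in X$. The second vanishing is the genuinely delicate one: it amounts to showing that the Cohen--Macaulay defect $H^1_{\mathfrak{m}}(S/I_X)$ is concentrated in bigraded degrees with first coordinate strictly less than $|\pi_1(X)|-1$, so that passing to the truncation $N$ removes the obstruction entirely. I expect to establish this via the Giuffrida--Maggioni--Ragusa analysis of the bigraded Hilbert function and the second-difference function $\Delta^2 H_X$ cited in the introduction, which pins down the bigraded positions where $S/I_X$ fails to be Cohen--Macaulay; the bound $|\pi_1(X)|-1$ is precisely the $x$-direction Castelnuovo--Mumford regularity threshold of $X$.
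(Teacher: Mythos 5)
Your part (i) is fine and reaches the same conclusion the paper does (that $J := I_X \cap \langle x_0,x_1\rangle^a$ satisfies $J:B^\infty = I_X$, hence the minimal free resolution of $S/J$ is a virtual resolution of $S/I_X$); you use the identity $\langle x_0,x_1\rangle^a:B^\infty = S$ together with distributivity of saturation over intersection, while the paper proves the same identity by two explicit inclusions and then cites its Theorem \ref{virtualresideals}. Both are correct. Your part (ii) diverges from the paper, and this is where a genuine gap appears.

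In part (ii) you correctly set up the short exact sequence
\[
0 \longrightarrow N \longrightarrow S/J \longrightarrow S/\langle x_0,x_1\rangle^a \longrightarrow 0, \qquad N \cong \bigoplus_{d_1\ge a}(S/I_X)_{(d_1,d_2)},
\]
and correctly invoke the depth lemma to reduce to showing $\operatorname{depth}(N)\ge 2$, i.e.\ $H^0_{\mathfrak m}(N)=H^1_{\mathfrak m}(N)=0$. The $H^0$ vanishing is fine. But the $H^1$ vanishing, which you acknowledge is ``the genuinely delicate one,'' is left as an expectation rather than a proof, and the sketch you give does not quite make sense as stated. Truncating a module in high $x$-degree does \emph{not} truncate its $\mathfrak m$-local cohomology: the Cech complex for $\mathfrak m$ inverts $x_0,x_1$, so even though $N$ lives in $x$-degrees $\ge a$, $H^1_{\mathfrak m}(N)$ can a priori live in arbitrary degrees. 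What you actually get from the long exact sequence (once you check $H^0_{\mathfrak m}(M/N)=0$, which does hold because $y_0$ is a non-zero-divisor and $M/N$ lives in $x$-degrees $< a$) is an \emph{injection} $H^1_{\mathfrak m}(N)\hookrightarrow H^1_{\mathfrak m}(S/I_X)$. Knowing that $H^1_{\mathfrak m}(S/I_X)$ is concentrated in $x$-degrees below some bound would then only tell you $H^1_{\mathfrak m}(N)$ is also concentrated there; it would not force $H^1_{\mathfrak m}(N)=0$. You would need an additional argument locating the possible degrees of $H^1_{\mathfrak m}(N)$ itself, and you give none. Moreover the concentration claim for $H^1_{\mathfrak m}(S/I_X)$ is also not proved, only flagged as something to derive from Giuffrida--Maggioni--Ragusa. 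So the key step of the proof is missing.

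For comparison, the paper's route to $\operatorname{depth}(S/J)\ge 2$ is elementary and avoids local cohomology entirely: it first checks that $y_0$ remains a non-zero-divisor on $S/J$, then proves (Lemma \ref{keylemma}, the technical heart of the section) that for $a\ge|\pi_1(X)|-1$ the ideal $\langle J, y_0\rangle$ admits the explicit primary decomposition
\[
\langle J, y_0\rangle = \bigcap_{A_k\in\pi_1(X)} \langle y_0, y_1^{\alpha_k}, L_{A_k}\rangle \cap \langle \langle x_0,x_1\rangle^a, y_0\rangle,
\]
and finally observes that a generic form $L+y_1$ of degree $(1,1)$ avoids all the associated primes of this decomposition, producing the second element of a regular sequence. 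The bound $a\ge|\pi_1(X)|-1$ enters precisely in proving the reverse inclusion of the primary decomposition, via the stabilization of the Hilbert function recorded in Corollary \ref{specialvaluesofHX2}. If you want to salvage the local-cohomology route, you would need to actually pin down $H^1_{\mathfrak m}(N)$ (not just $H^1_{\mathfrak m}(S/I_X)$), for instance via the exact triangle relating $R\Gamma_{\mathfrak m}$, $R\Gamma_{\langle x_0,x_1\rangle}$, and $R\Gamma_{\langle y_0,y_1\rangle}$ and the fact that $N=\langle x_0,x_1\rangle^a\cdot (S/I_X)$; but that is substantially more work than the paper's direct argument.
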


Our results suggest several other directions for future work. For instance, it would be of interest to explore whether the methods of the current manuscript can be extended to cover more general multiprojective spaces $\mathbb{P}^{n_1} \times \mathbb{P}^{n_2} \times \cdots \times \mathbb{P}^{n_s}$, possibly starting with the smallest such generalizations such as $\mathbb{P}^1 \times \mathbb{P}^2$ or $\mathbb{P}^1 \times \mathbb{P}^1 \times \mathbb{P}^1$. It would also be of interest to explore virtual resolutions of subvarieties of multiprojective spaces of dimension $>0$.

\noindent
{\bf Acknowledgments.} 
The authors thank Ayah Almousa, Daniel Erman, and Michael Loper
for useful discussions,  and the two referees for their
comments and improvements.
The authors used Macaulay2 \cite{M2},
and in particular, the package {\tt VirtualResolutions} \cite{Ayah}
in their computer experiments.  An early version of the results
in this paper appeared in the second author's MSc thesis \cite{N}.
 Harada's research is supported by NSERC Discovery Grant 2019-06567 and a Canada Research Chair (Tier 2) Award. 
 Van Tuyl’s research is supported by NSERC Discovery Grant 2019-05412.


\section{Background}
\label{backgroundsection}

In this section we recall some relevant background
on virtual resolutions and finite sets of points in $\mathbb{P}^1 \times 
\mathbb{P}^1$. Although it is not the most general setting for the theory of virtual resolutions, for the purposes of this manuscript we restrict attention to the case of the ring $S = \Bbbk[x_0, x_1, y_0, y_1]$, the $\mathbb{Z}^2$-graded coordinate ring of $\mathbb{P}^1 \times \mathbb{P}^1$, where $\Bbbk$ is a field of characteristic $0$. The $\mathbb{Z}^2$-grading is defined by setting $\mathrm{deg}(x_i) = (1,0)$ and $\mathrm{deg}(y_i) = (0,1)$ for $i=0,1$.   An ideal $I$ of $S$ is a 
{\it bihomogeneous
ideal} if the generators of $I$ are bihomogeneous with
respect to this bigrading on $S$.  We use $\preceq$ to denote
the partial ordering on $\mathbb{N}^2$ defined by
$(i_1,j_1) \preceq (i_2,j_2)$ if  and only if
$i_1 \leq i_2$ and $j_1 \leq j_2$.

The original definition of virtual resolutions given in \cite{Virtual} uses the geometric language of sheaves. We do not give the sheaf-theoretic definition, but we will instead use an equivalent algebraic definition.  To state it, we need some notation.
Let $B$ denote the irrelevant ideal $\langle x_0, x_1\rangle \cap \langle y_0, y_1 \rangle = 
\langle x_0y_0,x_0y_1,x_1y_0,x_1y_1\rangle$ of $\mathbb{P}^1 \times \mathbb{P}^1$ in $S$. For any $S$-module $M$, let
$$  
\Gamma_B(M) := \{m \in M \, \mid \, B^t m = 0 \, \textup{ for some } t \in \mathbb{N} \},
$$
i.e., $\Gamma_B(M)$ consists of elements annihilated by some 
power of the irrelevant ideal. 
With this notation, we can state the definition for a complex to be a virtual resolution. Note that the homology groups $H_i(\mathcal{F}_{\bullet})$ of a complex $\mathcal{F}_{\bullet}$ of $S$-modules are naturally $S$-modules. 

\begin{definition}\label{virtualdefn}
Let $M$ be a finitely generated $S$-module. Let 
$$\mathcal{F}_{\bullet} := \dots \rightarrow F_2 \rightarrow F_1 \rightarrow F_0$$
be a complex of finitely generated free $S$-modules satisfying:
\begin{enumerate}
\item[(1)]for each $i>0$ there is some power $t$ such that $B^tH_i(\mathcal{F}_{\bullet})=0$, and
\item[(2)]$H_0(\mathcal{F}_{\bullet})/\Gamma_B(H_0(\mathcal{F}_{\bullet}))\cong M/\Gamma_B(M)$.
\end{enumerate}
Then $\mathcal{F}_{\bullet}$ is a {\it virtual resolution} of $M$.
\end{definition}

\begin{remark}
A proof of the equivalence of Definition \ref{virtualdefn}
and the original geometric definition for a virtual
resolution given in \cite{Virtual} can be found in Kennedy's thesis 
\cite[Theorem 4.9]{Jarvis}. 
\end{remark}

Given a bihomogeneous ideal $J$ of $S$, the 
{\it $B$-saturation} of $J$ is the 
ideal defined as 
$$J:B^\infty := \bigcup_{t \in \mathbb{N}} 
(J:B^t).$$
 An ideal $J$ is {\it $B$-saturated} if
$J:B^\infty = J$.
The following theorem gives
one way to construct a virtual resolution of $S/I$.  

\begin{theorem}
\label{virtualresideals}
Let $I$ be a $B$-saturated bihomogeneous ideal of $S$.
If $J$ is a bihomogeneous ideal of $S$ with
$J:B^\infty = I$, then the minimal free resolution of
$S/J$ is a virtual resolution of $S/I$.
\end{theorem}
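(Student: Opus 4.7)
The plan is to verify the two defining conditions of Definition~\ref{virtualdefn} directly for $\mathcal{F}_\bullet$, the minimal free resolution of $S/J$, regarded as a candidate virtual resolution of $S/I$.

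First I would dispatch condition (1) trivially: since $\mathcal{F}_\bullet$ is an honest (minimal) free resolution of $S/J$, we have $H_i(\mathcal{F}_\bullet) = 0$ for every $i > 0$. Consequently $B \cdot H_i(\mathcal{F}_\bullet) = 0$, so the first condition is automatic. This reduces the theorem to an identification of the $B$-torsion-free quotient of $H_0(\mathcal{F}_\bullet) = S/J$ with that of $S/I$.

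Next I would compute $\Gamma_B(S/J)$ explicitly. By definition, an element $f + J \in S/J$ lies in $\Gamma_B(S/J)$ if and only if $B^t f \subseteq J$ for some $t \in \mathbb{N}$, which is the same as $f \in J : B^t$ for some $t$, i.e.\ $f \in J : B^\infty$. Using the hypothesis $J : B^\infty = I$, this identifies
\[
\Gamma_B(S/J) \;=\; (J:B^\infty)/J \;=\; I/J.
\]
Therefore $(S/J)/\Gamma_B(S/J) = (S/J)/(I/J) \cong S/I$ by the third isomorphism theorem. The same computation applied to $I$ itself, together with the hypothesis that $I$ is $B$-saturated (so $I:B^\infty = I$), gives $\Gamma_B(S/I) = I/I = 0$, hence $(S/I)/\Gamma_B(S/I) \cong S/I$. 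Composing these two isomorphisms verifies condition (2).

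There is essentially no obstacle here beyond correctly unwinding the definitions; the heart of the argument is the identification $\Gamma_B(S/J) = I/J$, which uses only the defining hypothesis on $J$ and the construction of $\Gamma_B$. I would present the proof as two short paragraphs corresponding to the two conditions, taking care to invoke the $B$-saturatedness of $I$ explicitly when showing $\Gamma_B(S/I) = 0$, since this is the only place where that hypothesis is used.
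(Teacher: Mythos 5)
Your proposal is correct and follows essentially the same route as the paper's proof: condition (1) is automatic since the higher homology of a genuine free resolution vanishes, and condition (2) follows from the identifications $\Gamma_B(S/J)=I/J$ and $\Gamma_B(S/I)=0$, the latter using the $B$-saturatedness of $I$.
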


\begin{proof}
If $\mathcal{F}_{\bullet}:= \cdots \rightarrow F_2 \rightarrow
F_1 \rightarrow F_0=S$ is a minimal free resolution of
$S/J$, then $H_i(\mathcal{F}_{\bullet}) = 0$ for all $i > 0$.
Since $\Gamma_B(S/J) = (J:B^\infty)/J = I/J$,
$$H_0(\mathcal{F}_{\bullet})/\Gamma_B(H_0(\mathcal{F}_{\bullet}))
= (S/J)/(I/J) \cong S/I.$$
Because $I$ is $B$-saturated, 
$\Gamma_B(S/I) = (I:B^\infty)/I = (0)$,
and thus $H_0(\mathcal{F}_{\bullet})/\Gamma_B(H_0(\mathcal{F}_{\bullet}))
\cong (S/I)/(\Gamma_B(S/I)).$  So $\mathcal{F}_{\bullet}$ is a
virtual resolution of $S/I$ by Definition \ref{virtualdefn}.
\end{proof}

For Section~\ref{trimmingsection}, we use a 
result of Berkesch, Erman, and Smith  \cite{Virtual} 
which constructs virtual resolutions out of a minimal free resolution $\mathcal{F}_{\bullet}$ by deleting certain summands from the $F_i$. 
The rough idea of this construction is that we can ``throw away'' some summands in $\mathcal{F}_{\bullet}$, and what remains will be a virtual resolution. We will (informally) refer to this process as {\it trimming} a minimal free resolution (to obtain a virtual one).

The notion of $\underline{d}$-regularity,
as introduced by Maclagan and Smith \cite{Reg},
is used in the statement of the next theorem.  We will not give the definition here because, for the purposes of this paper, it will suffice to use a characterization of the multigraded regularity for 
points in $\mathbb{P}^1 \times \mathbb{P}^1$ as given in
Theorem \ref{theorem: regular pts} below.
We now state a special case of the result in  \cite[Theorem 1.3]{Virtual}, which suffices for our results. 

\begin{theorem}\label{theorem:trim} 
Let $M$ be a finitely generated $\mathbb{Z}^2$-graded $B$-saturated $S$-module and let $\underline{d} \in \mathbb{Z}^2$. Suppose $M$ is $\underline{d}$-regular. If $G$ is the free subcomplex of a minimal free resolution of $M$ consisting of all summands generated in
degree at most $\underline{d}+(1,1)$ with respect to $\preceq$, then $G$ is a virtual resolution of $M$.
\end{theorem}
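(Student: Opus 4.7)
The plan is to verify the two conditions of Definition \ref{virtualdefn} for the subcomplex $G$, and this breaks naturally into three stages.

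First, I would confirm that $G$ is a genuine subcomplex of $\mathcal{F}_{\bullet}$. Minimality of $\mathcal{F}_{\bullet}$ implies that every entry of every differential lies in the irrelevant maximal graded ideal. In the bigraded setting, an entry mapping a summand $S(-\underline{a})$ of $F_i$ into a summand $S(-\underline{b})$ of $F_{i-1}$ must be a bihomogeneous polynomial of bidegree $\underline{a} - \underline{b}$, which is nonzero only when $\underline{b} \preceq \underline{a}$. The subset of summands generated in bidegrees $\preceq \underline{d}+(1,1)$ is therefore closed under the differentials, so $G$ is a well-defined subcomplex.

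Second, I would identify what obstruction remains. One standard consequence of $\underline{d}$-regularity in the sense of Maclagan--Smith \cite{Reg} is that the minimal generators of $M$ sit in bidegrees $\preceq \underline{d}$, so $G_0 = F_0$. Setting $Q := \mathcal{F}_{\bullet}/G$, the short exact sequence of complexes $0 \to G \to \mathcal{F}_{\bullet} \to Q \to 0$ and the acyclicity of $\mathcal{F}_{\bullet}$ above degree zero yield $H_i(G) \cong H_{i+1}(Q)$ for $i \geq 1$ together with an exact sequence $0 \to H_1(Q) \to H_0(G) \to M \to H_0(Q) \to 0$. Because $M$ is $B$-saturated we have $\Gamma_B(M) = 0$, and both conditions (1) and (2) of Definition \ref{virtualdefn} therefore reduce to showing that every $H_i(Q)$ is annihilated by a sufficiently large power of $B$.

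The third and most substantial step is to use $\underline{d}$-regularity to control the $B$-torsion of $H_i(Q)$. Every summand $S(-\underline{a})$ of $Q$ satisfies $a_1 \geq d_1+2$ or $a_2 \geq d_2+2$, so the shifts appearing in $Q$ lie outside the range dictated by regularity. Combining the multigraded local cohomology vanishings that characterize $\underline{d}$-regularity with a spectral sequence or \v{C}ech-complex argument on the irrelevant ideal $B$, one obtains a uniform power of $B$ that annihilates $H_i(Q)$ for every $i$.

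The main obstacle is precisely this last step. Unlike the $\mathbb{Z}$-graded case, where regularity translates cleanly into bounds on Betti degrees, the multigraded regularity of \cite{Reg} interacts with shifts through the combinatorics of the semigroup $\mathbb{N}^2$, so establishing a uniform annihilator of the homology of the discarded summands demands careful bookkeeping of multigraded local cohomology. Since this assertion is a special case of \cite[Theorem 1.3]{Virtual}, one can ultimately appeal to that reference to complete the argument.
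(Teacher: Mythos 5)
The paper does not actually prove Theorem~\ref{theorem:trim}: it is stated explicitly as ``a special case of the result in \cite[Theorem 1.3]{Virtual}'' and then used as a black box. Your proposal likewise ends by appealing to that same reference for the substantive step, so at the level of what is actually established, the two ``proofs'' coincide --- both are citations of Berkesch--Erman--Smith.

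That said, the scaffolding you build around the citation is sensible and worth commenting on. The degree argument showing that the kept summands form a genuine subcomplex is correct: a nonzero graded map $S(-\underline{a}) \to S(-\underline{b})$ forces $\underline{b} \preceq \underline{a}$, so the collection of summands with shift $\preceq \underline{d}+(1,1)$ is closed under the differentials (note this uses only that the differentials are graded of degree zero, not minimality). Your reduction via the short exact sequence $0 \to G \to \mathcal{F}_\bullet \to Q \to 0$ to the claim that each $H_i(Q)$ is $B$-torsion is also the right framing, and using $B$-saturatedness of $M$ to collapse both conditions of Definition~\ref{virtualdefn} into that single claim is clean. You correctly identify the local cohomology/spectral sequence step as the real content, and you do not attempt to carry it out. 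One small caution: the claim that $\underline{d}$-regularity in the Maclagan--Smith sense forces all minimal generators to lie in bidegrees $\preceq \underline{d}$ (hence $G_0 = F_0$) deserves a reference or a check; multigraded regularity does not translate into Betti-degree bounds as directly as in the $\mathbb{Z}$-graded case, which is exactly the kind of subtlety you flag in your final paragraph. Since you flag the gap honestly and defer to \cite{Virtual}, the proposal is consistent with the paper but is not an independent proof.
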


In the next section we will apply Theorem~\ref{theorem:trim} to the bihomogeneous coordinate rings $S/I_X$ where $X$ is a finite  set of points in $\mathbb{P}^1 \times \mathbb{P}^1$.  Note that going forward, we will assume that all sets
of points are finite.
As observed above, Theorem~\ref{theorem:trim} requires information about 
the {\it multigraded regularity} of $S/I_X$, denoted $\mathrm{reg}_B(S/I_X)$, defined to be the set of all $\underline{d}$ for which $S/I_X$ is $\underline{d}$-regular. 
Before giving the relevant statement, which is Theorem~\ref{theorem: regular pts}, we need some preliminaries and background concerning Hilbert functions and points in $\mathbb{P}^1 \times \mathbb{P}^1$.
See 
\cite{Adam} for details.

A point $P = [a_0: a_1] \times [b_0:b_1] \in \mathbb{P}^1 \times \mathbb{P}^1$ has bihomogeneous defining
ideal $I_P$ given by 
$$I_P = \langle a_1x_0 - a_0x_1,b_1y_0-b_0y_1 \rangle \subseteq S,$$
which is a prime ideal. For a set of $s$ distinct points $X = \{P_1,\ldots,P_s\}
\subseteq \mathbb{P}^1 \times \mathbb{P}^1$, the associated
bihomogenous ideal of $X$ is $I_X = I_{P_1} \cap \cdots \cap I_{P_s}$.
The coordinate ring of $X$ is $S/I_X$ (see \cite[Theorem 3.1, Corollary 3.2]{Adam} for more details).  Let K-$\dim (S/I_X)$ denote the Krull dimension of $(S/I_X)$. 
The following facts hold:

\begin{lemma}\label{P1xP1facts}
Let $X$ be a set of points in $\mathbb{P}^1 \times \mathbb{P}^1$
with coordinate ring $S/I_X$.
\begin{enumerate}
    \item There exists $L \in S$ with $\deg(L) = (0,1)$ 
    such that $\overline{L}$ is a non-zero-divisor in
    $S/I_X$.
    \item $1 \leq {\rm depth}(S/I_X) \leq \text{\emph{K-dim}}(S/I_X) = 2.$
    \item The ideal $I_X$ is $B$-saturated.
\end{enumerate}
\end{lemma}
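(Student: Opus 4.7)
The plan is to prove each of the three parts in turn, using standard structural facts about sets of points in a multiprojective space together with the elementary properties of the ideal $B$.

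For part (1), I would invoke a prime-avoidance-style argument. Since $I_X$ is the intersection of the prime ideals $I_{P_i}$, its associated primes are precisely $\{I_{P_1}, \ldots, I_{P_s}\}$, and so a bihomogeneous element is a zero-divisor on $S/I_X$ exactly when it lies in some $I_{P_i}$. A form $L = \beta_0 y_0 + \beta_1 y_1$ of bidegree $(0,1)$ lies in $I_{P_i} = \langle a_1 x_0 - a_0 x_1,\ b_1 y_0 - b_0 y_1 \rangle$ if and only if $L$ is a scalar multiple of $b_1 y_0 - b_0 y_1$ (by bidegree considerations, since no degree-$(0,1)$ form can be expressed using $a_1 x_0 - a_0 x_1$). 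Thus for each $i$ the ``bad'' $L$'s form a one-dimensional subspace of the two-dimensional space of $(0,1)$-forms. Since $\Bbbk$ has characteristic $0$, hence is infinite, one can avoid the finite union of these $s$ lines and pick $L$ outside every $I_{P_i}$.

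For part (2), I would first compute the Krull dimension: each $I_{P_i}$ is a height-$2$ prime in the polynomial ring $S$ of dimension $4$, so $I_X = \bigcap I_{P_i}$ also has height $2$, and hence $\mathrm{K\textup{-}dim}(S/I_X) = 2$. The inequality $\mathrm{depth}(S/I_X) \leq \mathrm{K\textup{-}dim}(S/I_X)$ is the standard fact that depth is bounded above by dimension for finitely generated graded modules over $S$. Finally, the existence of the non-zero-divisor $\overline{L}$ produced in (1) shows that $\mathrm{depth}(S/I_X) \geq 1$.

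For part (3), I would show each $I_{P_i}$ is itself $B$-saturated and then observe that $B$-saturation commutes with finite intersections. For the first point: if $f \in I_{P_i} : B^t$ then $fB^t \subseteq I_{P_i}$, and since $I_{P_i}$ is prime and $B \not\subseteq I_{P_i}$ (because $P_i$ is a genuine point of $\mathbb{P}^1 \times \mathbb{P}^1$, so at least one of $x_k y_\ell$ fails to vanish at $P_i$), we conclude $f \in I_{P_i}$, giving $I_{P_i} : B^\infty = I_{P_i}$. Since $(\bigcap_i J_i) : B^t = \bigcap_i (J_i : B^t)$ for any family of ideals, taking the union over $t$ yields $I_X : B^\infty = \bigcap_i (I_{P_i} : B^\infty) = \bigcap_i I_{P_i} = I_X$.

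None of these steps is a serious obstacle: the only mildly subtle point is the claim in (1) that a $(0,1)$-form lies in $I_{P_i}$ only when it is proportional to the generator $b_1 y_0 - b_0 y_1$, which needs a brief bidegree argument, and the reliance in (2) on the general fact $\mathrm{depth} \leq \dim$, which is entirely standard and can simply be cited.
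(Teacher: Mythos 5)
Your proposal is correct and takes essentially the same route as the paper: the paper simply cites \cite[Lemma 3.5]{Adam} for (1) (adapting from bidegree $(1,0)$), cites \cite[Lemma 4.2]{Adam} for (2), and for (3) observes that each $I_{P_i}$ is $B$-saturated and that $B$-saturation commutes with finite intersection. You have unpacked those citations into the standard prime-avoidance argument over an infinite field, the height computation plus the general bound $\mathrm{depth} \leq \dim$, and the fact that a prime not containing $B$ is automatically $B$-saturated, all of which are the expected proofs behind the references.
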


\begin{proof}
For statement (1), \cite[Lemma 3.5]{Adam} proves the 
case $\deg(L)=(1,0)$ and the proof can be easily 
adapted to our case. 
Statement (2) is \cite[Lemma 4.2]{Adam}.
For statement (3), each ideal $I_{P_i}$ is $B$-saturated, so $I_X$ is $B$-saturated since the intersection
of $B$-saturated ideals is also saturated.
\end{proof}

\begin{remark}\label{convention}
By an appropriate change of coordinates, we can
assume without loss of generality that the non-zero-divisor
of Lemma \ref{P1xP1facts} (1) is $y_0$, i.e.,
we can assume that if $P = A \times B \in X$,
then $B \neq [0:1]$.
\end{remark}

We now recall the definition of Hilbert functions, their first and second differences, and some useful facts which we use below. 
First, for any bihomogeneous ideal $I \subseteq S$,
the {\it Hilbert function} $H_{S/I}$ of $S/I$ is the function $H_{S/I}:\mathbb{N}^2 \rightarrow \mathbb{N}$ defined by 
$$H_{S/I}(i,j):=\dim_{\Bbbk}(S/I)_{i,j}=\dim_{\Bbbk}S_{i,j}-\dim_{\Bbbk}[I]_{i,j}.$$

If $I_X$ is the defining ideal of a subvariety $X$ of $\mathbb{P}^1 \times 
\mathbb{P}^1$, we denote
$H_{S/I_X}$ by $H_X$.  
If we let $m_{i,j} := H_X(i,j)$ for $(i, j) \in \mathbb{N}^2$, we can
view $H_X$ as an infinite matrix $(H_X(i,j)) = (m_{i,j})_{(i, j) \in \mathbb{N}^2}$. The {\it first difference function} of $H_X$ is
the function $\Delta H_X: {\mathbb{N}}^2 \to \mathbb{N}$
given by 
$$   
\Delta H_X(i,j) = m_{i,j} - m_{i-1,j} - m_{i, j-1} + m_{i-1,j-1},
$$
where we use convention that $H(i,j) = m_{i,j} = 0$ if $i<0$ or $j<0$. If $c_{i,j} := \Delta H_X(i,j)$, then $\Delta H_X$ can also be
represented by the infinite matrix $(c_{i,j})_{(i,j) \in \mathbb{N}^2}$. 
The {\it second difference function} $\Delta^2 H_X$ is defined to be $\Delta(\Delta H_X)$, and so 
$$
\Delta^2 H_X(i,j) = c_{i,j} - c_{i-1,j} - c_{i, j-1} + c_{i-1, j-1}.
$$
We use the notation $\Delta^2 H_X = (d_{i,j})_{(i,j) \in \mathbb{N}^2}$ in a manner similar to $H_X$ and $\Delta H_X$. 

The functions $H_X$ and $\Delta^2 H_X$ capture
geometric  and algebraic information about a set of points $X$ 
in $\mathbb{P}^1 \times \mathbb{P}^1$.  
To explain this connection, we introduce
some further notation.    
Let $\pi_1:\mathbb{P}^1 \times \mathbb{P}^1 \rightarrow
\mathbb{P}^1$ denote the projection
map to the first coordinate.  Given a set 
of points $X \subseteq \mathbb{P}^1 \times \mathbb{P}^1$, the set $\pi_1(X) = \{A_1,\ldots,A_\ell\}$ is the set of distinct
first coordinates appearing in $X$.
For each $A_i \in \pi_1(X)$, let 
$\alpha_i = |\pi_1^{-1}(A_i) \cap X|$, i.e.,
the number of points of $X$ whose first coordinate
is $A_i$.  After relabelling, we can assume
that $\alpha_1 \geq \alpha_2 \geq \cdots \geq \alpha_\ell$.

We set $\alpha_X = (\alpha_1,\ldots,\alpha_\ell)$.
Note that the vector $\alpha_X$ is a partition
of the integer $|X|$, i.e., $|X| = \alpha_1+\cdots +\alpha_\ell$.  The {\it conjugate} of $\alpha_X$,
is the partition $\alpha^\star_X = 
(\alpha_1^\star,\ldots,\alpha^\star_{\alpha_1})$
where 

$\alpha_i^\star = |\{j \in \{1,\ldots,\ell\} ~|~
\alpha_j \geq i\}|$.  

We note that $|\pi_1(X)| =\alpha_1^\star \geq \alpha_2^\star \geq 
\cdots \geq \alpha^{\star}_{\alpha_1}$.
The conjugate $\alpha_X^\star$ of $\alpha_X$, which is related
to geometric information about $X$, is then 
encoded into the Hilbert function.  
 The next
theorem (see \cite[Theorem 3.29]{Adam}) 
describes one such relationship:

\begin{theorem}\label{specialvaluesofHX1}
Let $X \subseteq \mathbb{P}^1 \times \mathbb{P}^1$
be a set of points with
associated tuple $\alpha_X^\star 
= (\alpha_1^\star,\ldots,\alpha^\star_{\alpha_1})$.
%
For all $i \geq |\pi_1(X)|-1$, 
$$H_{S/I_X}(i,j) = \alpha_1^\star + \cdots +
\alpha_{j+1}^\star$$
where $\alpha_r^\star = 0$ if $r > \alpha_1$.
\end{theorem}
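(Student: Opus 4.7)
The plan is to compute $H_X(i,j)$ directly by analyzing the multigraded evaluation map $\varphi_{i,j} \colon S_{(i,j)} \to \Bbbk^{|X|}$ sending a bihomogeneous form $F$ to the tuple of its values at the points of $X$ (with respect to fixed affine lifts). Since $\ker \varphi_{i,j} = (I_X)_{(i,j)}$, we have $H_X(i,j) = \dim \operatorname{Image}(\varphi_{i,j})$, so the strategy is to decompose the target by fibers and reduce to known Hilbert functions of reduced points in $\mathbb{P}^1$.

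Set $\ell = |\pi_1(X)|$, write $\pi_1(X) = \{A_1, \ldots, A_\ell\}$, and let $X_k := \pi_1^{-1}(A_k) \cap X$, of size $\alpha_k$. Let $Y_k \subseteq \mathbb{P}^1$ denote the set of distinct second coordinates appearing in $X_k$, so $|Y_k| = \alpha_k$ and the classical Hilbert function of $\alpha_k$ distinct points in $\mathbb{P}^1$ gives $H_{Y_k}(j) = \min(j+1, \alpha_k)$. Grouping the coordinates of $\Bbbk^{|X|}$ by fiber, I would observe that $\varphi_{i,j}$ factors as
$$S_{(i,j)} \xrightarrow{\ \psi_{i,j}\ } \bigoplus_{k=1}^{\ell} \Bbbk[y_0,y_1]_j / (I_{Y_k})_j \;\hookrightarrow\; \bigoplus_{k=1}^{\ell} \Bbbk^{\alpha_k},$$
because the evaluation of $F$ at the points of $X_k$ depends only on the class of the restriction $F(A_k, y_0, y_1) \in \Bbbk[y_0,y_1]_j$ modulo $(I_{Y_k})_j$, and the second arrow is injective by the definition of $I_{Y_k}$. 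Hence $\dim \operatorname{Image}(\varphi_{i,j}) = \dim \operatorname{Image}(\psi_{i,j})$.

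The main step, and the point where the hypothesis $i \geq \ell - 1$ is used, is to show that $\psi_{i,j}$ is surjective. For each $k \in \{1, \ldots, \ell\}$ I would construct a separating form $L_k \in \Bbbk[x_0, x_1]_i$ satisfying $L_k(A_k) = 1$ and $L_k(A_{k'}) = 0$ for $k' \neq k$: explicitly, write $A_{k'} = [a_0^{(k')}:a_1^{(k')}]$ and take $L_k$ proportional to the product $\prod_{k' \neq k}\bigl(a_1^{(k')} x_0 - a_0^{(k')} x_1\bigr)$, multiplied by any form of degree $i - (\ell - 1)$ in $\Bbbk[x_0,x_1]$ that is nonzero at $A_k$ (e.g.\ a suitable power of a linear form nonvanishing at $A_k$). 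Since the vanishing product has degree $\ell - 1$, such an $L_k$ exists precisely when $i \geq \ell - 1$. Given any tuple $(g_1, \ldots, g_\ell)$ in the middle term, lifting each $g_k$ to $\tilde{g}_k \in \Bbbk[y_0,y_1]_j$ and setting $F := \sum_k L_k(x_0, x_1)\, \tilde{g}_k(y_0, y_1)$ gives $\psi_{i,j}(F) = (g_1, \ldots, g_\ell)$.

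Assembling these pieces, $H_X(i,j) = \sum_{k=1}^{\ell} \min(j+1, \alpha_k)$, and the claim follows from the routine double-counting identity
$$\sum_{r=1}^{j+1} \alpha_r^\star = \sum_{r=1}^{j+1} \#\{k : \alpha_k \geq r\} = \sum_{k=1}^{\ell} \#\{r : 1 \leq r \leq j+1,\ \alpha_k \geq r\} = \sum_{k=1}^{\ell} \min(j+1, \alpha_k),$$
together with the convention $\alpha_r^\star = 0$ for $r > \alpha_1$. The only genuinely nontrivial ingredient is the construction of the separating forms $L_k$ and the associated lifting argument for $\psi_{i,j}$; the remaining steps are linear algebra and the classical Hilbert function of a reduced zero-dimensional subscheme of $\mathbb{P}^1$.
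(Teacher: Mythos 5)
The paper does not actually prove this statement; it is quoted as a known result from the book of Guardo and Van Tuyl (their Theorem~3.29, which traces back to Giuffrida--Maggioni--Ragusa), so there is no in-text proof to compare against. Judged on its own terms, your argument is correct and self-contained. The factorization of the evaluation map $\varphi_{i,j}$ through $\psi_{i,j} \colon S_{(i,j)} \to \bigoplus_k \Bbbk[y_0,y_1]_j/(I_{Y_k})_j$ is valid because all points of $X_k$ share the first coordinate $A_k$, and the second arrow is injective by definition of $I_{Y_k}$, so $H_X(i,j) = \dim \operatorname{Image}(\psi_{i,j})$. The Lagrange-type separating forms $L_k$ of degree $i$ exist precisely when $i \ge \ell - 1$ (the vanishing product has degree $\ell - 1$ and one pads with a form of degree $i - (\ell-1)$ nonvanishing at $A_k$), and the lift $F = \sum_k L_k \tilde g_k$ then shows surjectivity of $\psi_{i,j}$. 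Combined with $\dim \Bbbk[y_0,y_1]_j/(I_{Y_k})_j = \min(j+1,\alpha_k)$ and the double-counting identity $\sum_{r=1}^{j+1}\alpha_r^\star = \sum_k \min(j+1,\alpha_k)$, this gives the claimed formula. Compared with the source's treatment, which is embedded in a systematic study of the combinatorial structure of $H_X$ (first differences, borders, and eventual stabilization of rows), your argument is more direct: it isolates exactly the linear-algebraic reason (interpolation in the first $\mathbb{P}^1$ factor once $i$ is large enough) for the row of $H_X$ to stabilize, at the cost of not yielding the broader structural information about $H_X$ that the book's approach also delivers.
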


We will require the following corollary.

\begin{corollary}\label{specialvaluesofHX2}
Let $X \subseteq \mathbb{P}^1 \times \mathbb{P}^1$
be a set of points with
associated tuple $\alpha_X^\star 
= (\alpha_1^\star,\ldots,\alpha^\star_{\alpha_1})$, and let $L \in S$ correspond to the non-zero-divisor
of Lemma \ref{P1xP1facts}.
For all $i \geq |\pi_1(X)|-1$, 
$$H_{S/(I_X + \langle L \rangle)}(i,j) = \alpha_{j+1}^\star$$
where $\alpha_r^\star = 0$ if $r > \alpha_1$.
\end{corollary}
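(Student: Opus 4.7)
The plan is to exploit the non-zero-divisor $L$ of degree $(0,1)$ on $S/I_X$ guaranteed by Lemma \ref{P1xP1facts}(1). Multiplication by $L$ gives a short exact sequence of $\mathbb{Z}^2$-graded $S$-modules
$$0 \longrightarrow (S/I_X)(0,-1) \xrightarrow{\;\cdot L\;} S/I_X \longrightarrow S/(I_X + \langle L \rangle) \longrightarrow 0,$$
which, upon taking bigraded Hilbert functions in degree $(i,j)$, yields
$$H_{S/(I_X + \langle L \rangle)}(i,j) = H_{S/I_X}(i,j) - H_{S/I_X}(i,j-1).$$

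Now I would invoke Theorem \ref{specialvaluesofHX1}. The hypothesis $i \geq |\pi_1(X)|-1$ is exactly what is needed to apply that theorem to both $H_{S/I_X}(i,j)$ and $H_{S/I_X}(i,j-1)$ simultaneously (the bound involves only $i$, so it holds for both terms). Substituting gives
$$H_{S/(I_X + \langle L \rangle)}(i,j) = (\alpha_1^\star + \cdots + \alpha_{j+1}^\star) - (\alpha_1^\star + \cdots + \alpha_j^\star) = \alpha_{j+1}^\star,$$
with the convention $\alpha_r^\star = 0$ for $r > \alpha_1$ inherited from the theorem.

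The only subtlety is handling the boundary case $j = 0$: here $H_{S/I_X}(i,-1) = 0$ by the standard convention that $m_{i,j} = 0$ when $j < 0$, and the right-hand side becomes $\alpha_1^\star$, which is consistent with the empty sum being zero. No real obstacle arises, since the hardest work has already been done in Lemma \ref{P1xP1facts} and Theorem \ref{specialvaluesofHX1}; this corollary is essentially a two-line computation from the short exact sequence above.
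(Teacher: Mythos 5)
Your argument is correct and mirrors the paper's proof almost verbatim: the paper writes the short exact sequence as $0 \to S/(I_X:\langle L\rangle)(0,-1) \xrightarrow{\times\overline L} S/I_X \to S/(I_X+\langle L\rangle) \to 0$ and then observes $I_X:\langle L\rangle = I_X$, while you invoke the non-zero-divisor hypothesis up front to write $(S/I_X)(0,-1)$ directly; both then take bigraded Hilbert functions and subtract using Theorem \ref{specialvaluesofHX1}. This is the same argument.
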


\begin{proof}
We have
a short exact sequence 
\[0 \rightarrow S/(I_X: \langle L \rangle)(0,-1)
\stackrel{\times \overline{L}}{\longrightarrow} 
S/I_X \rightarrow S/(I_X+\langle L \rangle) 
\rightarrow 0.\]
But $I_X:\langle L \rangle = I_X$ since $L$ is
a non-zero-divisor.  The maps in the
short exact sequence have degree $(0,0)$.
Thus, since Hilbert functions are additive on short
exact sequences,
$$H_{S/(I_X+\langle L \rangle)}(i,j) =
H_{S/I_X}(i,j) - H_{S/I_X}(i,j-1).$$
The conclusion follows from the formula for $H_{S/I_X}$ given 
in Theorem \ref{specialvaluesofHX1}.
\end{proof}

When $\alpha_X = (\alpha_1)$, i.e., the points of $X$ all have the same 
first coordinate,

\cite[Lemma 3.26]{Adam} 
gives more information about $I_X$
and its Hilbert function:

\begin{lemma}\label{pointsonruling}
Suppose $X \subseteq \mathbb{P}^1 \times \mathbb{P}^1$ is a set of points
of the form $$X  = \{A \times B_1, A\times B_2,\ldots, A\times B_\alpha\}.$$
with $A = [a_0:a_1]$ and $B_i = [b_{i,0}:b_{i,1}]$ for $i=1,\ldots,\alpha$.
Then
\begin{enumerate}
\item $H_X(i,j) = 
\begin{cases}
j+1 & \mbox{if $j < \alpha$} \\
\alpha & \mbox{if $j \geq  \alpha$.}
\end{cases}$
\item $I_X  = \langle L_A, L_{B_1}L_{B_2}\cdots L_{B_{\alpha}} \rangle$ 
where $L_A = a_1x_0-a_0x_1$ and
$L_{B_i} = b_{i,1}y_0-b_{i,0}y_1$ for $i=1,\ldots,\alpha$.
\end{enumerate}
\end{lemma}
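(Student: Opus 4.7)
The plan is to establish part (2) first via a Hilbert function comparison, from which part (1) drops out as a byproduct. Set $J := \langle L_A, G \rangle$ where $G := L_{B_1}L_{B_2}\cdots L_{B_\alpha}$. The inclusion $J \subseteq I_X$ is immediate from $I_X = \bigcap_{k} I_{P_k}$ together with $L_A \in I_{P_k}$ for every $k$ (since every $P_k$ has first coordinate $A$) and $L_{B_k} \in I_{P_k}$ for each $k$, so that $G \in I_{P_k}$ for every $k$. It then suffices to show that $H_{S/J}$ and $H_{S/I_X}$ agree as functions on $\mathbb{N}^2$, whence $J = I_X$.

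The key observation for computing $H_{S/J}$ is that $L_A$ lives in the subring $\Bbbk[x_0,x_1]$ (bidegree $(1,0)$) while $G$ lives in $\Bbbk[y_0,y_1]$ (bidegree $(0,\alpha)$), so they form a regular sequence in $S$ and
$$S/J \;\cong\; \bigl(\Bbbk[x_0,x_1]/\langle L_A\rangle\bigr) \otimes_\Bbbk \bigl(\Bbbk[y_0,y_1]/\langle G\rangle\bigr).$$
The first factor is isomorphic to a standard graded polynomial ring in one variable, with Hilbert function identically $1$, and the second factor has Hilbert function $j+1$ for $j<\alpha$ and $\alpha$ for $j \geq \alpha$ (computed from the short exact sequence given by multiplication by $G$). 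Multiplying yields precisely the formula asserted in part (1).

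For $H_{S/I_X}$, I would argue directly using the evaluation map
$$\mathrm{ev}\colon S_{i,j} \longrightarrow \Bbbk^\alpha, \qquad F \mapsto \bigl(F(P_1), \ldots, F(P_\alpha)\bigr),$$
whose kernel is $[I_X]_{i,j}$. Since every $P_k$ has first coordinate $A$, the form $L_A$ vanishes at every $P_k$; for $i \geq 1$ the quotient $S_{i,j}/(L_A \cdot S_{i-1,j})$ may be identified with $\Bbbk[y_0,y_1]_j$ of dimension $j+1$, and the induced map $\Bbbk[y_0,y_1]_j \to \Bbbk^\alpha$ is evaluation at the $\alpha$ distinct points $B_1,\ldots,B_\alpha \in \mathbb{P}^1$. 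By one-variable Lagrange interpolation this has rank $\min(j+1,\alpha)$; the case $i=0$ is handled the same way applied directly to $S_{0,j}=\Bbbk[y_0,y_1]_j$. Combining this with the computation above gives $H_{S/J} = H_{S/I_X}$, and together with $J \subseteq I_X$ this forces $J = I_X$, proving (2) while verifying (1) in passing. I do not anticipate a serious obstacle; the only care required is the $i=0$ versus $i \geq 1$ bookkeeping in the evaluation argument and the (routine) check that $\{L_A, G\}$ is a regular sequence.
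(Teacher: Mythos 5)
The paper gives no proof of this lemma; it is quoted from Guardo and Van Tuyl's book, cited as \cite[Lemma 3.26]{Adam}, so there is no in-paper argument to compare yours against. Your blind proof is correct and self-contained. The strategy is sound: you verify the easy containment $J := \langle L_A, L_{B_1}\cdots L_{B_\alpha}\rangle \subseteq I_X$, compute $H_{S/J}$ from the tensor decomposition $S/J \cong (\Bbbk[x_0,x_1]/\langle L_A\rangle) \otimes_\Bbbk (\Bbbk[y_0,y_1]/\langle G\rangle)$ (which holds because the two generators lie in disjoint sets of variables), compute $H_{S/I_X}$ as the rank of the evaluation map and reduce it, modulo $L_A$, to evaluation of $\Bbbk[y_0,y_1]_j$ at $\alpha$ distinct points of $\mathbb{P}^1$ (rank $\min(j+1,\alpha)$ by interpolation and root-counting), and conclude $J = I_X$ from the agreement of Hilbert functions. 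Both parts of the lemma fall out at once, as you say. Two small points are worth spelling out in a final write-up: the evaluation map requires a choice of affine representatives for the $P_k$, although the kernel $[I_X]_{i,j}$ and the rank are independent of that choice; and the identification $S_{i,j}/(L_A\cdot S_{i-1,j}) \cong \Bbbk[y_0,y_1]_j$ alters the induced map only by a nonzero scalar in each coordinate (the value at $A$ of a chosen basis element of $\Bbbk[x_0,x_1]_i/L_A\Bbbk[x_0,x_1]_{i-1}$), so the rank computation is unaffected.
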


\begin{remark}\label{swapruling}
By swapping the roles of the $A$'s and $B$'s and the roles of the
$i$'s and $j$'s, a similar result holds for points of the form
$X  = \{A_1 \times B, A_2\times B,\ldots, A_\alpha \times B\}.$

\end{remark}

We say that a set of points
$X \subseteq \mathbb{P}^1 \times \mathbb{P}^1$
has the {\it generic Hilbert function}
if
$$H_X(i,j) = \min\{|X|, (i+1)(j+1)\} 
~\mbox{for all $(i,j) \in \mathbb{N}^2$}.$$
As we saw above,
to use Theorem \ref{theorem:trim}, we need 
the multigraded
regularity of $S/I_X$.  If $X$ has the generic Hilbert function,
the multigraded regularity is easy to compute. The next result is simply a specialization
of \cite[Proposition 6.7]{Reg} to the case of 
points in $\mathbb{P}^1 \times \mathbb{P}^1$:

\begin{theorem}\label{theorem: regular pts}
If $X \subseteq \mathbb{P}^1 \times \mathbb{P}^1$
is a set of points with the generic Hilbert function, then 
$${\rm reg}_B(S/I_X) = \{ (i,j) ~|~ (i+1)(j+1) \geq |X| \}.$$
\end{theorem}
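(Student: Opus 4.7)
The plan is to invoke Maclagan--Smith's \cite[Proposition 6.7]{Reg} and then translate the generic Hilbert function hypothesis into the claimed description of $\mathrm{reg}_B(S/I_X)$. In the setting of a finite set of points $X$ in $\mathbb{P}^1 \times \mathbb{P}^1$, that proposition specializes to the statement that $(i,j) \in \mathrm{reg}_B(S/I_X)$ if and only if $H_X(i',j') = |X|$ for every $(i',j') \succeq (i,j)$. The reason for this specialization is that $|X|$ is the (constant) Hilbert polynomial of a set of $|X|$ reduced points, and, because $I_X$ is $B$-saturated by Lemma \ref{P1xP1facts}(3), the gap between $H_X$ and this constant polynomial is controlled by the local cohomology groups $H^i_B(S/I_X)$ for $i \geq 1$, whose vanishing at the appropriate shifted degrees is precisely the $\underline{d}$-regularity condition.

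Granted this specialization, the theorem reduces to a short arithmetic check. By hypothesis, $H_X(i,j) = \min\{|X|,(i+1)(j+1)\}$, so $H_X(i,j) = |X|$ exactly when $(i+1)(j+1) \geq |X|$. Moreover, if $(i+1)(j+1) \geq |X|$ and $(i',j') \succeq (i,j)$, then $(i'+1)(j'+1) \geq (i+1)(j+1) \geq |X|$, so the equality $H_X(i',j') = |X|$ propagates to every $(i',j') \succeq (i,j)$ automatically. Hence $(i,j) \in \mathrm{reg}_B(S/I_X)$ is equivalent to $(i+1)(j+1) \geq |X|$, which is the claimed description of the regularity region.

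The main work of the argument is therefore not the arithmetic, but the verification that \cite[Proposition 6.7]{Reg} specializes, in the $\mathbb{P}^1 \times \mathbb{P}^1$ case, to the Hilbert-function characterization above. This amounts to unwinding the Maclagan--Smith definition in terms of the vanishing of $H^i_B(S/I_X)_{\underline{d}'}$ at certain shifts of $\underline{d}$, observing that for a zero-dimensional $B$-saturated ideal these vanishing conditions collapse into the single requirement that the natural map $(S/I_X)_{\underline{d}'} \to H^0(\mathcal{O}_X(\underline{d}'))$ is an isomorphism for all $\underline{d}' \succeq \underline{d}$, and using that $\dim_\Bbbk H^0(\mathcal{O}_X(\underline{d}')) = |X|$ for every $\underline{d}'$. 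Once this translation is in hand, the generic Hilbert function formula finishes the proof immediately.
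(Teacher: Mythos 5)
Your proposal is correct and follows essentially the same route as the paper, which likewise treats the result as a direct specialization of \cite[Proposition 6.7]{Reg} to points in $\mathbb{P}^1 \times \mathbb{P}^1$ and offers no further proof. You fill in the arithmetic and the local-cohomology justification for why the Maclagan--Smith criterion collapses to a Hilbert-function condition, which is a reasonable and accurate unpacking of what the paper leaves implicit.
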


We now introduce some notation and results concerning the minimal free resolutions  of $S/I_X$. 
By Lemma \ref{P1xP1facts}, since $1 \leq {\rm depth}(S/I_X) \leq 2$, 
the Auslander-Buchsbaum Theorem (see \cite[Theorem 19.9]{Eisenbud}) implies that the $\mathbb{Z}^2$-graded minimal free resolution of 
$S/I_X$ has either length $2$ or $3$.  Following
the notation of Giuffrida, Maggioni, and Ragusa \cite{GMR2} we 
can write this resolution as 

\footnotesize
\begin{equation}\label{eq: min free res}
0 \rightarrow \bigoplus_{\ell=1}^p S(-a_{3,\ell},-a'_{3,\ell})\hookrightarrow \bigoplus_{\ell=1}^n S(-a_{2, \ell},-a'_{2, \ell}) \rightarrow \bigoplus_{\ell=1}^m S(-a_{1, \ell},-a'_{1, \ell}) \rightarrow S \rightarrow S/I_X \rightarrow 0
\end{equation}
\normalsize
With the above notation, we define 
$$
\alpha_{r,s}:=\# \lbrace (a_{1,i},a'_{1,i})=(r,s) \rbrace \, \textup{ and } \, 
\beta_{r,s}:=\# \lbrace (a_{2,i},a'_{2,i})=(r,s)\rbrace
$$
$$\textup{ and } \, 
\gamma_{r,s}:=\# \lbrace (a_{3,i},a'_{3,i})=(r,s)\rbrace. 
$$
Thus $\alpha_{r,s}$ is the number of 
generators of $I_X$ of degree $(r,s)$, and 
$\beta_{r,s}$ and $\gamma_{r,s}$ represent the number of
first and second syzygies of degree $(r,s)$, respectively. 
Giuffrida, Maggioni, and Ragusa \cite[Proposition 3.3]{GMR2} showed a number
of relations between $H_X,\Delta H_X$ and $\Delta^2 H_X$ and
the invariants in ~\eqref{eq: min free res}; we record
one such relation in the next theorem.

\begin{theorem}\label{prop:6} 
Let $X$ be a set of points in $\mathbb{P}^1 \times \mathbb{P}^1$ 
with second difference Hilbert function $\Delta^2 H_X = (d_{i,j})$.
If $(r,s)\succ (0,0)$, then $d_{r,s}= -\alpha_{r,s}+ \beta_{r,s}- \gamma_{r,s}.$
\end{theorem}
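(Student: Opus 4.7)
The plan is to exploit the additivity of Hilbert functions along the minimal free resolution (\ref{eq: min free res}) and then apply the linear operator $\Delta^2$ coordinatewise. Since the resolution (\ref{eq: min free res}) is exact, additivity of $\dim_\Bbbk(-)_{(i,j)}$ yields
\[
H_X(i,j) = H_S(i,j) - \sum_{(a,b)\in\mathbb{N}^2} \alpha_{a,b}\, H_S(i-a,j-b) + \sum_{(a,b)} \beta_{a,b}\, H_S(i-a,j-b) - \sum_{(a,b)} \gamma_{a,b}\, H_S(i-a,j-b),
\]
where I use the convention $H_S(i,j)=(i+1)(j+1)$ for $(i,j)\succeq (0,0)$ and $0$ otherwise, so that $H_{S(-a,-b)}(i,j) = H_S(i-a,j-b)$. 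The operator $\Delta^2$ is linear in the bigrading and commutes with the bigraded shifts implicit on the right, so applying $\Delta^2$ termwise to both sides transforms the identity above into the analogous identity for $\Delta^2 H_X$.

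The second step is to compute $\Delta^2 H_S$ explicitly. A direct check using the definitions of $\Delta$ and $\Delta^2$ (with the convention that $H_S$ vanishes when either index is negative) shows that $\Delta H_S(i,j) = 1$ for all $(i,j) \succeq (0,0)$, and consequently $\Delta^2 H_S(i,j)$ equals $1$ at $(0,0)$ and $0$ at every other $(i,j) \in \mathbb{N}^2$. In other words, $\Delta^2 H_S$ is the indicator function of the origin. Thus $\Delta^2$ applied to $H_{S(-a,-b)}$ produces an indicator function supported at $(a,b)$.

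Finally, I would fix $(r,s) \succ (0,0)$ and evaluate the transformed identity at $(r,s)$. The term $\Delta^2 H_S(r,s)$ vanishes since $(r,s)\ne(0,0)$. In each sum, only the summand with $(a,b)=(r,s)$ contributes, and it contributes exactly $\alpha_{r,s}$, $\beta_{r,s}$, or $\gamma_{r,s}$ respectively. Combining signs yields
\[
\Delta^2 H_X(r,s) = -\alpha_{r,s} + \beta_{r,s} - \gamma_{r,s},
\]
as claimed. The main obstacle is simply the bookkeeping for the boundary behavior of $H_S$ at the axes $i=0$ and $j=0$ when verifying the delta-function formula for $\Delta^2 H_S$; once that is in hand, the rest is formal.
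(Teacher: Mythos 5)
Your proof is correct. Note that the paper itself offers no proof of this theorem: it cites it directly from Giuffrida, Maggioni, and Ragusa \cite[Proposition 3.3]{GMR2}, so there is no in-paper argument to compare against. The argument you give — additivity of Hilbert functions along the minimal free resolution \eqref{eq: min free res}, the observation that $\Delta^2$ is a linear difference operator commuting with twists, the computation that $\Delta^2 H_S$ is the Kronecker delta at the origin, and evaluation at $(r,s) \succ (0,0)$ — is the standard derivation of such formulas and is essentially the content of the cited source. The one boundary check you flagged (behavior of $\Delta H_S$ along the axes $i=0$ and $j=0$) does go through: with the convention $H_S(i,j)=0$ when $i<0$ or $j<0$, one gets $\Delta H_S \equiv 1$ on $\mathbb{N}^2$ and hence $\Delta^2 H_S = \delta_{(0,0)}$ exactly as you claim, so the bookkeeping closes cleanly.
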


We will use another result of Giuffrida, Maggioni, and Ragusa, which (under a certain hypothesis on the set $X$ of points) relates the degrees of the minimal generators of $I_X$, in particular,
the values $\alpha_{i,j}$ defined above, to the integers $d_{i,j}$ in the second difference matrix $\Delta^2 H = (d_{i,j})$. The version stated here is a special case of  \cite[Theorem 4.3]{GMR1}:

\begin{theorem}\label{theorem:main theorem}
Let $s \geq 1$ be an integer. There exists a dense open subset $U$ of $(\mathbb{P}^1 \times \mathbb{P}^1)^s$ such that for every $(P_1,\dots,P_s) \in U$, the set of points $X = \lbrace P_1,\dots,P_s\rbrace$ satisfies:
\begin{enumerate} 
\item $X$ has generic Hilbert function, and
\item for any $(i,j) \in \mathbb{N}^2$ such that
$d_{i,j} <0$ and $d_{i,s} >0$ for some $s > j$, or,
$d_{i,j} < 0$ and $d_{r,j} > 0$ for some $r > i$, 
then we have $\alpha_{i,j}=-d_{i,j}$, i.e., the number of minimal generators of $I_X$ of degree $(i,j)$ is $-d_{i,j}$. 
\end{enumerate} 
Moreover, every non-zero $\alpha_{i,j}$ arises in the manner described in (2).  
\end{theorem}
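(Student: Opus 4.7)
The plan is to combine a standard semicontinuity argument with a careful analysis of the shape of $\Delta^2 H_X$ for points in sufficiently general position. The first step is to produce the dense open set $U$ on which (1) holds. The Hilbert function of a quotient $S/I_X$ is upper semicontinuous in flat families of $s$-point subschemes, so the locus $U_1 \subseteq (\mathbb{P}^1 \times \mathbb{P}^1)^s$ on which $H_X$ equals the pointwise-minimum value $\min\{s,(i+1)(j+1)\}$ is open; its non-emptiness (hence density) follows from exhibiting one explicit configuration with the generic Hilbert function, for instance by induction on $s$ using a Bertini-style argument or the explicit constructions available in \cite{Adam}. Graded Betti numbers are also upper semicontinuous in flat families, so we restrict further to the open subset $U \subseteq U_1$ on which all the numbers $\alpha_{r,s}$, $\beta_{r,s}$, $\gamma_{r,s}$ attain their minimum values consistent with the generic $H_X$; this $U$ is the dense open set we want.

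On $U$, the numerics of the minimal free resolution \eqref{eq: min free res} can be attacked bidegree-by-bidegree. By Theorem \ref{prop:6}, for $(r,s) \succ (0,0)$ we have the identity $d_{r,s} = -\alpha_{r,s} + \beta_{r,s} - \gamma_{r,s}$, so proving $\alpha_{i,j} = -d_{i,j}$ reduces to showing $\beta_{i,j} = \gamma_{i,j} = 0$ whenever the hypothesis of (2) holds. The key observation is that for generic $X$ the sign pattern of $\Delta^2 H_X$ in any given row or column is very constrained, because it is dictated by the transition of $H_X$ from the quadratic regime $(i+1)(j+1)$ into the constant regime $s$. The hypothesis that $d_{i,j}<0$ is accompanied by a strictly positive entry later in the same row or column places $(i,j)$ inside the ``generator zone'' of that row/column, and no minimal first or second syzygy of bidegree $(i,j)$ should be able to appear there for a generic configuration.

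Here lies the main obstacle: one must rule out ``ghost'' cancellation, i.e., show that on $U$ no higher-degree minimal generator of $I_X$ admits a syzygy in bidegree $(i,j)$ that would force $\beta_{i,j}$ or $\gamma_{i,j}$ to be nonzero. The natural strategy is to induct on $(i,j)$ with respect to $\preceq$: once the $\alpha_{i',j'}$ for $(i',j') \prec (i,j)$ are controlled, one uses the explicit description of $H_X$ from Theorem \ref{specialvaluesofHX1} together with the identity above to show that the tentative contribution $\beta_{i,j}-\gamma_{i,j}$ must vanish on $U$, since otherwise one could strictly decrease the Betti numbers on a specialization, contradicting that $U$ parametrizes configurations of minimum Betti numbers.

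For the ``moreover'' statement, we argue by contraposition. Suppose $\alpha_{i,j} > 0$ but neither alternative in (2) holds, meaning $d_{i,j'} \leq 0$ for all $j' \geq j$ and $d_{i',j} \leq 0$ for all $i' \geq i$. Combining the sign constraints on $\Delta^2 H_X$ forced by the generic Hilbert function of Theorem \ref{specialvaluesofHX1} with the identity of Theorem \ref{prop:6}, one shows that a minimal generator of bidegree $(i,j)$ is incompatible with this pattern: such a generator would produce a positive $d_{i,j'}$ or $d_{i',j}$ further out in the row or column, a contradiction. The unavoidable technical burden throughout is the combinatorial bookkeeping of which bidegrees $(r,s)$ can carry nonzero $\beta_{r,s}$ or $\gamma_{r,s}$ for generic $X$, and verifying that this pattern matches exactly the negative (respectively positive) entries of $\Delta^2 H_X$ described in the hypothesis.
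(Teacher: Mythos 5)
The paper does not prove this theorem; it is cited verbatim as a special case of Theorem~4.3 in the 1996 paper of Giuffrida, Maggioni, and Ragusa \cite{GMR1}, so there is no ``paper's own proof'' for you to be compared against. Your attempt is therefore a from-scratch argument, and unfortunately it has a genuine, and in fact quite serious, gap at exactly the place you flag as ``the main obstacle.''

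The semicontinuity argument only establishes that, on your open set $U$, each graded Betti number $\alpha_{r,s}$, $\beta_{r,s}$, $\gamma_{r,s}$ attains its \emph{minimum value over the family}. It does \emph{not} tell you what that minimum is. In particular, it does not show that the minimum agrees with the value predicted by the Hilbert function (i.e., that all ``consecutive cancellation'' in the Betti table actually happens). That stronger claim is precisely a Minimal Resolution Conjecture-type statement; for points in a single projective space $\mathbb{P}^n$ it is a famous open/false-in-general problem, so it certainly cannot follow from semicontinuity alone. The sentence ``otherwise one could strictly decrease the Betti numbers on a specialization'' is not a valid step: semicontinuity says Betti numbers can only \emph{increase} under specialization, and, more to the point, a strict inequality $\beta_{i,j} > \gamma_{i,j}$ (or $\beta_{i,j},\gamma_{i,j}>0$ simultaneously) in a \emph{minimal} free resolution cannot be ``cancelled'' by passing to a smaller resolution--minimality already rules that out. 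So the claim $\beta_{i,j}=\gamma_{i,j}=0$ on $U$ is asserted but never proved. The actual proof in \cite{GMR1} requires a substantive geometric/algebraic argument (liaison-type and explicit resolution techniques on the smooth quadric), not just a semicontinuity reduction. A minor additional slip: your $\min\{s,(i+1)(j+1)\}$ is the pointwise \emph{upper} bound for $H_X(i,j)$, achieved by generic $X$ because $\dim_k (I_X)_{i,j}$ is upper semicontinuous, so the description ``pointwise-minimum value'' is inverted, though the underlying semicontinuity idea is fine.

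If you want a self-contained proof rather than a citation, the honest route is to either reproduce the inductive construction of \cite{GMR1} (building the generic resolution degree-by-degree by adding points to a configuration with controlled resolution) or to verify the cancellation claim directly for the specific shape of $\Delta^2 H_X$ coming from the generic Hilbert function, using explicit generators and syzygies for a carefully chosen configuration and then spreading out by semicontinuity. The semicontinuity framework you set up is the right wrapper, but the interior of the argument is missing.
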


Motivated by the above result,
a set of $s$ points $X= \lbrace P_1,\dots,P_s \rbrace$ in $\mathbb{P}^1 \times \mathbb{P}^1$ is said to be in {\it sufficiently general position} if $(P_1,\dots,P_s)$ belongs to the open set of 
Theorem \ref{theorem:main theorem}.

\begin{example}\label{runningexample}
We illustrate some of the ideas of this section with the following
running example.
Let $X$ be a set of six points in $\mathbb{P}^1 \times \mathbb{P}^1$
that are in sufficiently general position.   The Hilbert
function of $S/I_X$ is thus the generic Hilbert function.
Thus we know $H_X$, and consequently $\Delta^2 H_X$; these functions
are written as infinite matrices below (where rows and columns are index by $\{0,1,\ldots\}$):
\[
H_X = \begin{bmatrix} 
    1 & 2 & 3 & 4 & 5 & 6 & 6 & \  \\
    2 & 4 & 6 & 6 & 6 & 6 & 6 & \  \\
    3 & 6 & 6 & 6 & 6 & 6 & 6 & \ \\
    4 & 6 & 6 & 6 & 6 & 6 & 6 & \dots  \\
    5 & 6 & 6 & 6 & 6 & 6 & 6 & \  \\
    6 & 6 & 6 & 6 & 6 & 6 & 6 & \  \\
    6 & 6 & 6 & 6 & 6 & 6 & 6 & \  \\
    \ & \ & \ & \vdots & \ & \ & \   \\
    \end{bmatrix}
~~\mbox{and}~~
\Delta^2 H_X = \begin{bmatrix} 
    1 & 0 & 0 & 0 & 0 & 0 & -1 & \  \\
    0 & 0 & 0 & -2 & 0 & 0 & 2 & \  \\
    0 & 0 & -3 & 4 & 0 & 0 & -1 & \ \\
    0 & -2 & 4 & -2 & 0 & 0 & 0 & \dots  \\
    0 & 0 & 0 & 0 & 0 & 0 & 0 & \  \\
    0 & 0 & 0 & 0 & 0 & 0 & 0 & \  \\
    -1 & 2 & -1 & 0 & 0 & 0 & 0 & \  \\
    \ & \ & \ & \vdots & \ & \ & \   \\
    \end{bmatrix}
.\]
Theorem \ref{theorem:main theorem} says we can determine
the  number of the minimal generators of
$I_X$ of any particular bidegree by looking for 
negative entries in the matrix $\Delta^2 H_X = (d_{i,j})$ which also 
have a positive entry either to the right of, or below, the negative entry.
For our example of six points, these entries are:
$$\begin{array}{llll}
\bullet & d_{0,6}=-1 <0  ~~\mbox{and}~~ d_{1,6}=2>0 & \bullet & d_{3,1}=-2 <0
~~\mbox{and}~~ d_{6,1} =2>0 \\
\bullet & d_{1,3}=-2 <0 ~~\mbox{and}~~ d_{1,6}=2>0  & \bullet & 
d_{6,0} =-1 <0 ~~\mbox{and}~~  d_{6,1}=2>0. \\
\bullet & d_{2,2}=-3 <0 ~~\mbox{and}~~ d_{2,4}=4>0 &
\end{array}$$
So, the ideal $I_X$ has one minimal generator of degree $(0,6)$,
two minimal generators of degree $(1,3)$, three minimal
generators of degree $(2,2)$, two minimal generators of degree $(3,1)$,
and one minimal generator of degree $(6,0)$.
This agrees with the bigraded minimal free resolution
of $S/I_X$ obtained by using Macaulay2:
\begin{equation}\label{res6pts}
0 \rightarrow 
\begin{matrix}
S(-3,-3)^2\\
\oplus\\
S(-6,-2)\\
\oplus\\
S(-2,-6)
\end{matrix}
\rightarrow
\begin{matrix}
S(-3,-2)^4\\
\oplus\\
S(-2,-3)^4\\
\oplus\\
S(-1,-6)^2\\
\oplus\\
S(-6,-1)^2
\end{matrix} 
\rightarrow 
\begin{matrix}
S(-3,-1)^2
\oplus
S(-2,-2)^3\\
\oplus \\
S(-1,-3)^2\\
\oplus\\
S(0,-6)
\oplus
S(-6,0)
\end{matrix}
\rightarrow S \rightarrow S/I_X \rightarrow 0.
\end{equation}
\end{example}

Very roughly speaking, 
a set of $s$ points $X$ in
$\mathbb{P}^1 \times \mathbb{P}^1$ is in sufficiently general position if the points are ``random'' enough. In other words, there should be no subset $Y$ of $X$ that is contained  on some curve of smaller than
expected degree.   One way to interpret Theorem
\ref{theorem:main theorem} is that for 
a collection of points $X$ that is random enough, 
the number of generators of $I_X$ and their
degrees can be read off the Hilbert function.  As
the next example shows, simply having the correct
Hilbert function does not allow us to count
the generators and their degrees from $H_X$.

\begin{example}
Consider the five points 
$$X = \{[1:1]\times [1:1],[1:2]\times [1:2],[1:3]\ \times [1:3],[1:4] \times [1:4],[1:6] \times [1:8]\}.$$
These five points are not in sufficiently general position. Indeed, note that for this set of points, the 
bihomogeneous element $x_0y_1-x_1y_0$ of degree $(1,1)$ pass  through the first four points;  
for a sufficiently general set of points, we would expect
no four points of a set of five points to lie on a curve
of degree $(1,1)$.  

This set of points also gives us an example
of a set of points that satisfies the first condition
of Theorem \ref{theorem:main theorem}, but not
the second. The Hilbert function of $S/I_X$
is the generic Hilbert function, where
the corresponding second difference function is given
by
\[
H_X = \begin{bmatrix} 
    1 & 2 & 3 & 4 & 5 & 5 & \  \\
    2 & 4 & 5 & 5 & 5 & 5 & \  \\
    3 & 5 & 5 & 5 & 5 & 5 &  \ \\
    4 & 5 & 5 & 5 & 5 & 5 &  \dots  \\
    5 & 5 & 5 & 5 & 5 & 5 & \  \\
    5 & 5 & 5 & 5 & 5 & 5 & \  \\
    \ & \ & \ & \vdots & \ & \ & \   \\
    \end{bmatrix}
    ~~\mbox{and}~~
\Delta^2 H_X = \begin{bmatrix} 
    1 & 0 & 0 & 0 & 0 & -1 & \  \\
    0 & 0 & -1 & -1 & 0  & 2 & \  \\
    0 & -1 & 0 & 2 & 0  & -1 & \ \\
    0 & -1 & 2 & -1 & 0 & 0 & \dots  \\
    0 & 0 & 0 & 0 & 0 & 0 & \  \\
    -1 & 2 & -1 & 0 & 0 & 0  & \  \\
       \ & \ & \ & \vdots & \ & \\   \\
       \end{bmatrix}.\]
The ideal of $I_X$ contains a generator of degree $(2,2)$,
namely,
$$288x_0^2y_0^2-600x_0^2y_0y_1+41x_1^2y_0y_1+420x_0^2y_1^2-161x_0x_1y_1^2+12x_1^2y_1^2.$$
However, if $X$ was in sufficiently general position,
then the ideal $I_X$ would have no generator of degree $(2,2)$ by Theorem \ref{theorem:main theorem} 
because the $(2,2)$ entry 
of $\Delta^2 H_X$ is not negative. 

\end{example}

\begin{remark}
Gao, Li, Loper, and Mattoo \cite{Loper1} 
defined a set of points to 
be a {\it virtual complete intersection} if
$S/I_X$ has a virtual resolution of the 
form $K(f,g)$, where $K(f,g)$ denotes the 
Koszul complex of two bihomogeneous
forms $f$ and $g$.  Any finite
set of points $X$ in $\mathbb{P}^1 \times \mathbb{P}^1$ that is in
sufficiently general position is also
a virtual complete intersection.  To see
this fact, first note
that for such a set of points, 
the first coordinates, respectively
second coordinates, of $X$ are distinct,
so each horizontal and vertical ruling
of $\mathbb{P}^1 \times \mathbb{P}^1$ that
intersects $X$ meets at only one point.  Because
$X$ has the same number of points
on each vertical and horizontal ruling, 
$X$ is now a virtual complete intersection by
\cite[Theorem 3.1]{Loper1}.  In the next
section, we will give alternative virtual
resolutions for these sets of points.
\end{remark}


\section{Virtual resolutions via  ``trimming''}
\label{trimmingsection}

In this section, we give a family of explicitly described virtual resolutions for points in sufficiently general position
in $\mathbb{P}^1 \times \mathbb{P}^1$, using the trimming result of Berkesch, Erman, and Smith. Our main result is the following.

\begin{theorem}\label{maintheoremtrim}
Let $X$ be a set of points in 
$\mathbb{P}^1 \times \mathbb{P}^1$ in sufficiently general
position. Assume $\lvert X \rvert > 1$ and suppose that
there exists an $(i,j) \in \mathbb{N}^2$ with $i < j$ such that
$|X| = (i+1)(j+1)$.   Then there is a virtual resolution of $S/I_X$ of length two.
In fact, the virtual resolution can be described explicitly
as follows.
Let $|X| = (i+2)q+r$ with $q, r \in {\mathbb N}$ and $0 \leq r < (i+2)$. 

Then the virtual resolution has the form

$$0 \rightarrow 
S(-i-1,-j-1)^{2i+2} \rightarrow 
\begin{array}{c}
S(-i,-j-1)^{i+1} \\
    \oplus \\
S(-i-1,-q)^{i+2-r}       \\
      \oplus \\
S(-i-1,-q-1)^r
\end{array}
\rightarrow S \rightarrow S/I_X \rightarrow 0,
$$
where if $r=0$, the module $S(-i-1,-q-1)^0$ does not appear.
\end{theorem}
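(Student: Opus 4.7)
The plan is to apply the trimming theorem (Theorem~\ref{theorem:trim}) to the minimal free resolution $\mathcal{F}_\bullet = (\cdots \to F_2 \to F_1 \to F_0)$ of $S/I_X$ with $\underline{d} = (i,j)$. Since $X$ is in sufficiently general position, Theorem~\ref{theorem:main theorem}(1) gives that $X$ has the generic Hilbert function, so $H_X(a,b) = \min\{N,(a+1)(b+1)\}$ with $N := |X| = (i+1)(j+1)$. The inequality $(i+1)(j+1) \geq |X|$ is then trivial, so Theorem~\ref{theorem: regular pts} gives $(i,j) \in \mathrm{reg}_B(S/I_X)$, and Theorem~\ref{theorem:trim} yields that the subcomplex $G$ of $\mathcal{F}_\bullet$ consisting of all summands generated in multidegree $\preceq (i+1,j+1)$ is a virtual resolution of $S/I_X$. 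The rest of the proof identifies $G$ with the complex appearing in the statement.

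The heart of the argument is a direct computation of $\Delta^2 H_X$ throughout the box $\mathcal{B} := \{(a,b) \in \mathbb{N}^2 : (a,b) \preceq (i+1,j+1)\}$. On the sub-box $(a,b) \preceq (i,j)$ one has $H_X(a,b) = (a+1)(b+1)$, which forces $\Delta H_X \equiv 1$ and hence $\Delta^2 H_X(a,b) = 0$ for all such $(a,b) \neq (0,0)$, with $\Delta^2 H_X(0,0)=1$. A case analysis on the boundary row $a = i+1$, splitting the range of $b$ into $b \leq q-1$, $b = q$, $b = q+1$, and $q+2 \leq b \leq j$, and on the boundary column $b = j+1$, using the division $N = (i+2)q + r$ and the hypothesis $i < j$ (which implies $q \leq j$ and guarantees the five positions below are distinct), shows that $\Delta^2 H_X$ vanishes on $\mathcal{B}$ except at
\[
\Delta^2 H_X(0,0) = 1,\quad \Delta^2 H_X(i,j+1) = -(i+1),\quad \Delta^2 H_X(i+1,q) = -(i+2-r),
\]
\[
\Delta^2 H_X(i+1,q+1) = -r,\quad \Delta^2 H_X(i+1,j+1) = 2i+2,
\]
where the penultimate entry vanishes when $r = 0$. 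A useful consistency check is the telescoping identity $\sum_{(a,b) \in \mathcal{B}} \Delta^2 H_X(a,b) = \Delta H_X(i+1,j+1) = 0$. Each of the three negative entries lies in the same row as the positive entry at $(i+1,j+1)$, so Theorem~\ref{theorem:main theorem}(2) reads off the minimal generators of $I_X$ of multidegree $\preceq (i+1,j+1)$: exactly $i+1$ of degree $(i,j+1)$, $i+2-r$ of degree $(i+1,q)$, and $r$ of degree $(i+1,q+1)$, matching the $F_1$-part of $G$ in the statement.

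It remains to determine the $F_2$-part of $G$ and to verify $G_3 = 0$. Combining Theorem~\ref{prop:6} with the computed values of $d_{a,b}$ and $\alpha_{a,b}$ on $\mathcal{B}$ forces $\beta_{a,b} = \gamma_{a,b}$ at every position of $\mathcal{B}$ except at $(i+1,j+1)$, where $\beta_{i+1,j+1} - \gamma_{i+1,j+1} = 2i+2$. The main obstacle is to show $\gamma_{a,b} = 0$ throughout $\mathcal{B}$, equivalently that no summand of $F_3$ in $\mathcal{F}_\bullet$ has bidegree inside $\mathcal{B}$; granting this, we conclude $\beta_{i+1,j+1}=2i+2$ and $\beta_{a,b}=0$ elsewhere in $\mathcal{B}$, giving the trimmed $F_2 = S(-i-1,-j-1)^{2i+2}$ and $F_3 = 0$ as claimed. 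I expect this vanishing to be the delicate step; my approach would be to combine the $(a,b) \leftrightarrow (b,a)$ symmetry of the bigraded Betti numbers of $\mathcal{F}_\bullet$ (which follows from semicontinuity applied to $X$ and its image under swapping the two $\mathbb{P}^1$ factors) with the hypothesis $i < j$ to constrain the possible bidegrees of $F_3$-summands, and then to exclude the remaining on-diagonal cases using the structural results of Giuffrida--Maggioni--Ragusa on which Theorem~\ref{theorem:main theorem} and Theorem~\ref{prop:6} rely.
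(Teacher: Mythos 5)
Your outline matches the paper's through the first three steps: applying Theorem~\ref{theorem:trim} with $\underline{d}=(i,j)$, computing $\Delta^2 H_X$ on the box $\mathcal{B}=\{(a,b)\preceq(i+1,j+1)\}$, reading off $\alpha_{a,b}$ via Theorem~\ref{theorem:main theorem}, and using Theorem~\ref{prop:6} to force $\beta_{a,b}-\gamma_{a,b}=d_{a,b}+\alpha_{a,b}$. Your computed entries of $\Delta^2H_X$ agree with the paper's Lemma~\ref{submatrix}. However, you leave a genuine gap at precisely the step you flag as ``delicate'': showing $\gamma_{a,b}=0$ throughout $\mathcal{B}$, which is needed to conclude $\beta_{a,b}=0$ for $(a,b)\ne(i+1,j+1)$ and $\beta_{i+1,j+1}=2i+2$. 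You propose to handle this via the $(a,b)\leftrightarrow(b,a)$ symmetry of the generic Betti table plus ``structural results of Giuffrida--Maggioni--Ragusa,'' but you do not carry this out, and it is not clear the symmetry by itself constrains $\gamma_{a,b}$ inside the asymmetric box $\mathcal{B}$ (whose reflection is a different box).

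The paper closes this gap with a much more elementary observation that you appear to have missed: by minimality of the free resolution, $\beta_{r,s}>0$ forces some $\alpha_{a,b}>0$ with $(a,b)\prec(r,s)$, and $\gamma_{r,s}>0$ forces some $\beta_{a,b}>0$ with $(a,b)\prec(r,s)$ (a syzygy of degree $(r,s)$ must be supported on generators of degree strictly below $(r,s)$, since its coefficients are non-units). Combined with the fact that all nonzero $\alpha_{a,b}$ with $(a,b)\in\mathcal{B}$ occur in the two boundary rows $a\in\{i,i+1\}$, one checks by inspection of $\Delta^2H_X$ from Lemma~\ref{submatrix} that at each successive $(a,b)\in\mathcal{B}$ there is no $\beta_{a',b'}>0$ strictly below, hence $\gamma_{a,b}=0$, hence $\beta_{a,b}=\gamma_{a,b}+d_{a,b}+\alpha_{a,b}$ is $0$ except at $(i+1,j+1)$. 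This induction over $\mathcal{B}$ is both shorter and more robust than a symmetry argument (it requires no semicontinuity input), and it is the missing ingredient that would complete your proposal.
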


\begin{remark}
If $|X|=1$, then $I_X$ is a complete intersection and the minimal free resolution of $S/I_X$ 
$$0\rightarrow S(-1,-1)\rightarrow S(-1,0)\oplus S(0,-1)\rightarrow S\rightarrow S/I_X \rightarrow 0 $$
is already of length two. For this reason, we omit this case in the theorem above. 
\end{remark}

\begin{remark}
For a set $X$ of points with $\lvert X \rvert >1$ in sufficiently general position in 
$\mathbb{P}^1 \times \mathbb{P}^1$, there
always exists at least one tuple $(i,j)$ that satisfies the hypotheses, namely
$(i,j) = (0,|X|-1)$.  
\end{remark}

As a first step toward the proof of Theorem~\ref{maintheoremtrim}, we show that its hypotheses 
completely determine a submatrix of the second difference Hilbert function $\Delta^2 H_X$. In fact, we can compute this submatrix quite explicitly, as recorded in the lemma below. 

\begin{lemma}\label{submatrix}
Assume the hypotheses and notation of Theorem \ref{maintheoremtrim}.  

If $i> 0$ and $q<j$, then
the $(i+2) \times (j+2)$ submatrix of $\Delta^2 H_X$ consisting of 
the first $(i+2)$ rows and the first $(j+2)$ columns of $\Delta^2 H_X$ is of the form:
$$
\bordermatrix{ 
  & 0 & 1 & \cdots & q & q+1 & \cdots & j & j+1 \cr 
0 & 1 & 0 & \cdots & 0    & 0& \cdots & 0  & 0 \cr
1& 0 & 0 & \cdots & 0    & 0& \cdots & 0  & 0 \cr
\vdots & \vdots & \vdots & & \vdots & \vdots & & \vdots & \vdots \cr
i-1 & 0 & 0 & \cdots & 0    & 0& \cdots & 0  & 0 \cr
i   & 0 & 0 & \cdots & 0    & 0& \cdots & 0  & -i-1 \cr
i+1 & 0 & 0 & \cdots & -i-2+r    & -r& \cdots & 0  & 2i+2 \cr
}.$$

{Note that  we take the convention that if $j=q+1$, then we ignore the column labelled $j$ and take the column labelled $q+1$, and if $i=1$, then we ignore the row labelled $i-1$ and take the row labelled $0$. }

If $i>0$ and $q=j=i+1$, then the $(i+2) \times (i+3)$ submatrix of $\Delta^2 H_X$ consisting of the first $(i+2)$ rows and the first $(i+3)=(j+2)$ columns of $\Delta^2 H_X$ is of the form: 
$$
\bordermatrix{ 
  & 0 & 1 & \cdots & j-2 & j-1  & j & j+1 \cr 
0 & 1 & 0 & \cdots & 0 & 0   &   0  & 0  \cr
1& 0 & 0 & \cdots & 0  & 0   & 0  & 0 \cr
\vdots & \vdots & \vdots & & \vdots & & \vdots & \vdots \cr
i-1 & 0 & 0 & \cdots & 0    & 0 & 0  & 0 \cr
i   & 0 & 0 & \cdots & 0    & 0 & 0  & -i-1 \cr
i+1 & 0 & 0 & \cdots & 0    & 0 & -i-2 & 2i+2 \cr
}.$$

{We make the convention that if $i=1$, then we ignore the row labelled $i-1$ and take the row labelled $0$.}

Finally, if $i=0$, then the first two rows and $j+2$ columns of $\Delta^2 H_X$ are of the form: 
$$\bordermatrix{ 
  & 0 & 1 & \cdots & q-1 & q & \cdots & j & j+1 \cr 
0   & 1 & 0 & \cdots & 0    & 0& \cdots & 0  & -1 \cr
1 & 0 & 0 & \cdots & -2+r    & -r& \cdots & 0  & 2 \cr
}.$$
\end{lemma}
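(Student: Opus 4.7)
The plan is to reduce the lemma to a direct computation. By Theorem~\ref{theorem:main theorem}(1), a set of points in sufficiently general position has the generic Hilbert function, so
$$H_X(a,b) = \min\{|X|, (a+1)(b+1)\}$$
is given by an explicit closed formula. From here each entry of the submatrix is determined mechanically from the nine neighbouring values of $H_X$ via the definition of $\Delta$ applied twice, so the lemma reduces to a finite (if tedious) calculation.

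To organize the calculation, I would first partition $\mathbb{N}^2$ into the \emph{polynomial region} $P := \{(a,b) : (a+1)(b+1) \leq |X|\}$, on which $H_X(a,b)=(a+1)(b+1)$, and the \emph{stable region} $\mathbb{N}^2 \setminus P$, on which $H_X(a,b) = |X|$. A direct calculation shows that $\Delta H_X(a,b)=1$ whenever the four cells $(a,b), (a-1,b), (a,b-1), (a-1,b-1)$ all lie in $P$ (using the standing convention $H_X(-1,b)=H_X(a,-1)=0$), and $\Delta H_X(a,b)=0$ whenever they all lie in the stable region. Consequently $\Delta^2 H_X(a,b)=0$ at every cell whose relevant $3 \times 3$ window of $H_X$-values stays on one side of the boundary of $P$. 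The non-zero entries in the submatrix are therefore located at the corner $(0,0)$ (where the convention on the negative axes contributes the value $1$) and at cells whose $3 \times 3$ window crosses the polynomial/stable boundary. Using the identity $|X| = (i+1)(j+1) = (i+2)q + r$ one verifies that rows $a \leq i-1$ remain entirely in $P$ for $0 \leq b \leq j+1$, that row $a=i$ exits $P$ between columns $b=j$ and $b=j+1$, and that row $a=i+1$ exits $P$ between columns $b=q-1$ and $b=q$; evaluating $\Delta^2 H_X$ at the handful of cells near these transitions then produces the remaining non-zero values claimed.

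The main obstacle will be the bookkeeping forced by the coincidences of boundary columns, which is precisely what dictates the three-case subdivision. In Case~1 ($i > 0$ and $q < j$), the row-$i$ and row-$(i+1)$ transitions lie in distinct columns and one obtains non-zero entries $-i-2+r$ and $-r$ in row $i+1$, together with $-i-1$ at $(i,j+1)$ and a corner value $2i+2$ at $(i+1,j+1)$ that picks up contributions from both transitions at once. In Case~2 ($q = j = i+1$, which forces $r=0$), the row-$i$ and row-$(i+1)$ transitions become adjacent, so the column-$(q+1)$ entry of row $i+1$ coincides with $(i+1,j+1)$ and is absorbed into the corner. In Case~3 ($i=0$) there are no rows strictly between $0$ and $i$, so row $0$ itself must carry the boundary contribution at $(0,j+1)$. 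In every case the work reduces to the same kind of direct evaluation, but one must carefully track which corners of the $3 \times 3$ window lie in $P$ versus the stable region so that no contribution is miscounted.
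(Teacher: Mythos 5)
Your plan is correct and is essentially the same as the paper's: both use the fact that sufficiently general position implies the generic Hilbert function $H_X(a,b) = \min\{|X|,(a+1)(b+1)\}$, and then compute the relevant entries of $\Delta^2 H_X$ directly. Your polynomial-region/stable-region framing is a tidy way to organize where the nonzero entries must live, but it reduces to the same explicit calculation of the $(i+2)\times(j+2)$ block of $H_X$ and its second differences that the paper carries out.
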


\begin{proof}
We first consider the case  $i > 0$ and $q<j$. Note that $i(j+2) < |X| = (i+1)(j+1)$.  
Indeed, $i(j+2) \geq (i+1)(j+1)$ implies that
$i \geq j+1$, contradicting our assumption that $i < j$.  So, because $X$ has the
generic Hilbert function, $H_X(i-1,j+1) = i(j+2)$.

More generally, under the assumptions $i>0$ and $q<j$ we can compute the 
first $(i+2)$ rows and first $(j+2)$ columns of $H_X$ to be the following: 
\footnotesize
$$\bordermatrix{ 
  & 0 & 1 & \cdots & q-1 & q & \cdots &j-1 & j & j+1 \cr 
0 & 1 & 2 & \cdots & q    & q+1& \cdots & j & j+1  & j+2 \cr
1& 2 & 4 & \cdots & 2q    & 2(q+1)& \cdots &2j & 2(j+1)  & 2(j+2)\cr
\vdots & \vdots & \vdots & & \vdots & \vdots & \vdots & \vdots & \vdots & \vdots \cr
i-1 & i &  (i)2 &\cdots & iq    & i(q+1)& \cdots  &ij &i(j+1)  & i(j+2) \cr
i   & i+1 & (i+1)2& \cdots & (i+1)q    & (i+1)(q+1)& \cdots & (i+1)j & |X|  & |X| \cr
i+1 & i+2 & (i+2)2 & \cdots & (i+2)q  & |X| & \cdots & |X| & |X| & |X| \cr
}.$$
\normalsize
Note we are using the fact that our hypotheses imply that 
$$H_X(i+1,q-1) = \min\{(i+2)q,|X|\} = 
\min\{(i+2)q,(i+2)q+r\} = (i+2)q$$
and that $H_X(i+1,q)=\lvert X \rvert.$
The conclusion of the lemma for this special case now follows from the definition of $\Delta^2 H_X$ and the above description of $H_X$.

The proofs for the case that $i>0$ and $q=j$
and the case that $i=0$ are similar and left to the reader.   For the case $i>0$ and $q=j$, note that since $i < j$,
$|X| = (i+2)q+r = (i+1)(j+1)$ only occurs if $r=0$ and $q = j = i+1$.
\end{proof}

Using the notation introduced in Section \ref{backgroundsection}, we require
the following calculations that compute the number of generators, syzygies, and
second syzygies of $I_X$ for specific degrees.

\begin{lemma}\label{gendegreelemma}
Assume the hypotheses and notation of Theorem \ref{maintheoremtrim},
and let $\Delta^2 H_X = (d_{r,s})$.  If
$(a,b) \preceq (i+1,j+1)$, then
\begin{enumerate}
    \item $\alpha_{a,b}= - d_{a,b}$ if and only if $d_{a,b}<0$, and $\alpha_{a,b}=0$ otherwise

\item
$
\beta_{a,b} = 
\begin{cases}
2i+2 & \mbox{if $(a,b) = (i+1,j+1)$} \\
0 & \mbox{otherwise}
\end{cases}
$
\item $\gamma_{a,b} = 0$.
\end{enumerate}
\end{lemma}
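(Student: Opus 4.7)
My plan is to combine the explicit computation of the second difference matrix from Lemma \ref{submatrix} with Theorem \ref{theorem:main theorem}, Theorem \ref{prop:6}, and a minimality-based induction argument.

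First, from Lemma \ref{submatrix} I would read off that, in the range $(a,b) \preceq (i+1,j+1)$, the only non-zero entries of $\Delta^2 H_X = (d_{a,b})$ are (in the generic case $i > 0$ and $q < j$) the values $d_{0,0}=1$, $d_{i,j+1}=-(i+1)$, $d_{i+1,q}=r-(i+2)$, $d_{i+1,q+1}=-r$ (the last vanishing when $r=0$), and $d_{i+1,j+1}=2i+2$; the boundary cases ($i=0$ or $q=j$) are entirely analogous. For part (1), every negative entry in this range has the positive entry $d_{i+1,j+1}$ either to its right (when $a = i+1$) or below (when $(a,b) = (i,j+1)$), so Theorem \ref{theorem:main theorem} yields $\alpha_{a,b} = -d_{a,b}$ at those positions. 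Its ``moreover'' clause, together with $\alpha_{0,0}=0$ (since $I_X$ is a proper ideal), then forces $\alpha_{a,b}=0$ at every other position where $d_{a,b}\geq 0$.

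For parts (2) and (3) I would proceed by strong induction on $(a,b)$ in the componentwise order $\preceq$, restricted to $\{(a,b) : (a,b) \preceq (i+1,j+1)\}$, with trivial base case $(0,0)$. For the inductive step, assume the claims hold for all $(c,d) \prec (a,b)$ in the range. To show $\gamma_{a,b}=0$, suppose for contradiction that $F_3$ has a minimal generator $\epsilon$ of degree $(a,b)$. Its image $\partial_3(\epsilon)$ lies in the degree-$(a,b)$ piece of $F_2$, which decomposes as $\bigoplus_{(c,d)\preceq(a,b)} S_{(a-c,b-d)}^{\beta_{c,d}}$. The inductive hypothesis kills every summand with $(c,d) \prec (a,b)$ (note that $(c,d)=(i+1,j+1)$ cannot arise here, since $(i+1,j+1) \prec (a,b) \preceq (i+1,j+1)$ is impossible), and the lone surviving summand at $(c,d)=(a,b)$ contributes only scalar coefficients in $S_{(0,0)}=\Bbbk$, which must vanish by minimality of the resolution. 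Hence $\partial_3(\epsilon)=0$; but $\mathrm{depth}(S/I_X) \geq 1$ (Lemma \ref{P1xP1facts}) combined with Auslander--Buchsbaum forces $\mathrm{pd}(S/I_X) \leq 3$, so $\partial_3$ is injective---contradicting $\epsilon \neq 0$. With $\gamma_{a,b}=0$ established, Theorem \ref{prop:6} gives $\beta_{a,b} = d_{a,b} + \alpha_{a,b}$ for $(a,b) \succ (0,0)$, and a direct case check using Step~1 and part (1) yields $\beta_{i+1,j+1}=2i+2$ and $\beta_{a,b}=0$ elsewhere in the range.

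The principal subtlety lies in the apex case $(a,b)=(i+1,j+1)$: here only the ``diagonal'' summand of $F_2$ can potentially survive the inductive vanishing, but its coefficient is forced to be a scalar and hence must also vanish by minimality, closing the induction.
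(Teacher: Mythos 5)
Your proof is correct and uses the same essential ingredients as the paper's: Lemma \ref{submatrix} plus Theorem \ref{theorem:main theorem} for part (1), and Theorem \ref{prop:6} combined with the principle that minimality forces syzygies to occur in strictly larger degrees for parts (2) and (3). The only difference is presentational: the paper states the degree-growth principle as a ``general fact'' and walks case-by-case through the positions (deriving $\beta_{a,b}$ first and then $\gamma_{a,b}$), whereas you package the whole thing as a clean strong induction on $\preceq$, deriving $\gamma_{a,b}=0$ first via minimality and the injectivity of $\partial_3$ (itself from Auslander--Buchsbaum and depth $\geq 1$) and then reading off $\beta_{a,b}$ from the Giuffrida--Maggioni--Ragusa relation --- the same underlying argument, just inverted in order and more systematically organized.
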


\begin{proof}
We first prove the formula for $\alpha_{a,b}$, the number of minimal generators of $I_X$
of degree $(a,b)$. Because $X$ is in sufficiently 
general position, by Theorem \ref{theorem:main theorem}, $\alpha_{a,b}$ can be read
off of $\Delta^2 H_X$.  Since we are only interested in all $(a,b) \preceq (i+1,j+1)$, we can
use Lemma \ref{submatrix} to compute all $\alpha_{a,b}$ in this range.  In particular, we note that 
for each $d_{a,b}$ with $d_{a,b}<0$ in this range, 
there exists an entry in $\Delta^2 H_X$ that is either to the right or below it which is positive. 
Applying Theorem \ref{theorem:main theorem} then gives the desired result.

To prove the formulas for $\beta_{a,b}$ and $\gamma_{a,b}$, we remind the reader that 
$$d_{r,s} = -\alpha_{r,s}+\beta_{r,s}- \gamma_{r,s} ~~\mbox{for all $(0,0) \neq (r,s)\in \mathbb{N}^2$}$$
by Theorem~\ref{prop:6}. 
We also recall the general fact that if $\beta_{r,s} >0$ (respectively $\gamma_{r,s} >0)$, then
there is some $\alpha_{a,b} >0$ (respectively, $\beta_{a,b} >0$) with $(a,b) \prec (r,s)$.
This is because $\beta_{r,s}$ records a syzygy among generators of degree strictly less than
$(r,s)$, and similarly, $\gamma_{r,s}$ records a second syzygy among the syzygies of
degrees strictly less than $(r,s)$.

For the discussion below, we always restrict to $(a,b) \preceq (i+1,j+1)$. 
As we have already observed, $\alpha_{a,b}>0$ exactly at the locations $(a,b)$ such that $d_{a,b}<0$  and $\alpha_{a,b}=0$ elsewhere. Let $(a,b)$ be such that $\alpha_{a,b}>0$ and assume that $b$ is the smallest index with $\alpha_{a,b}>0$. An examination of the different cases of $\Delta^2 H_X$ in Lemma~\ref{submatrix} yields the conclusion that $\alpha_{u,v}=0$ for any $(u,v) \prec (a,b)$, i.e., $I_X$ has no generators of these degrees. This then implies that $\beta_{u,v}=0$ and hence also $\gamma_{u,v}=0$ for such $(u,v)$. 

We now examine the locations at which $\alpha_{a,b}>0$ more closely. 

For $(a,b)=(i,j+1)$, we have
$$-\alpha_{i,j+1} = d_{i,j+1} = -\alpha_{i,j+1}+\beta_{i,j+1}-\gamma_{i,j+1}.$$
So, $\beta_{i,j+1}= \gamma_{i,j+1}$.  But now $\beta_{i,j+1}=0$ 
since, as we saw above, there is no $\alpha_{a,b} > 0$ with $(a,b) \prec (i,j+1)$.  Consequently,
$\gamma_{i,j+1}=0$.  
This argument also works to show that $\beta_{i+1,v}=\gamma_{i+1,v}=0$ as well, for $v$ the minimal index with $\alpha_{i+1,v} >0$. 

Suppose we are in a case when $\alpha_{i+1,v+1} >0$ where $v$ is the minimal index as above. 
For $(a,b)=(i+1,v+1)$, we again have 
$$-\alpha_{i+1,v+1} = d_{i+1,v+1} = -\alpha_{i+1,v+1}+\beta_{i+1,v+1}-\gamma_{i+1,v+1},$$
i.e., $\beta_{i+1,v+1} = \gamma_{i+1,v+1}$.  But since we have now seen that there is no $\beta_{a,b} >0$ for any
$(a,b) \prec (i+1,v+1)$, we must have $\gamma_{i+1,d+1}=0$.
But this implies $\beta_{i+1,v+1}=0$ also. 

Looking again at the possibilities for $\Delta^2 H_X$ as given in Lemma~\ref{submatrix}, we see that it remains to show that $\beta_{i+1,k}=\gamma_{i+1,k}=0$ for some values of $k \leq j+1$. We may proceed inductively. Let $k$ be the minimal index for which we wish to prove the claim and assume $k<j+1$ so that $d_{i+1,k}=0$.  We already know $\alpha_{i+1,k}=0$ and hence, by a similar argument as that above, $\beta_{i+1,k}=\gamma_{i+1,k}$. But we have already shown that all $\beta_{a,b}=0$ for $(a,b)\prec (i+1,k)$ and hence $\gamma_{i+1,k}=0$, which implies $\beta_{i+1,k}=0$. Now by induction the same holds for $\beta_{i+1,k+1}$ and $\gamma_{i+1,k+1}$ and so on, provided that $k<j$. 

Finally, we must analyze the case $(a,b)=(i+1,j+1)$, where we have $d_{i+1,j+1}=2i+2$. Since $\alpha_{i+1,j+1}=0$, we have
$$2i+2 = d_{i+1,j+1} = \beta_{i+1,j+1}-\gamma_{i+1,j+1}.$$
But as before, we have already seen that there is no $\beta_{a,b} >0$ for any $(a,b) \prec (i+1,j+1)$.  Hence
$\gamma_{i+1,j+1}=0$ and $\beta_{i+1,j+1}=2i+2$, as desired.  This completes the proof.
\end{proof}

We now come to the proof of Theorem \ref{maintheoremtrim}.

\begin{proof}[Proof of Theorem \ref{maintheoremtrim}]
Our hypotheses imply that $H_X(i,j) = \min\{(i+1)(j+1),|X|\} = (i+1)(j+1)$.  Thus, by
Theorem \ref{theorem: regular pts}, we have that $(i,j) \in {\rm reg}_B(S/I_X)$.  Hence,
when we trim the bigraded minimal free resolution of $S/I_X$ in degrees at most
$(i+1,j+1)$ using the partial
order $\preceq$, we obtain a virtual resolution of $S/I_X$ by Theorem 
\ref{theorem:trim}.

The bigraded minimal free resolution of $S/I_X$  can be written as
$$0 \rightarrow  \bigoplus_{(r,s) \in \mathbb{N}^2} S(-r,-s)^{\gamma_{r,s}}  \rightarrow  \bigoplus_{(r,s) \in \mathbb{N}^2} S(-r,-s)^{\beta_{r,s}} \rightarrow \bigoplus_{(r,s) \in \mathbb{N}^2} S(-r,-s)^{\alpha_{r,s}} \rightarrow S 
\rightarrow S/I_X \rightarrow 0.$$
If we now trim this resolution with respect to $(i+1,j+1)$, i.e., we keep those
summands generated in degrees $\preceq (i+1,j+1)$, then the conclusion follows from Lemma \ref{gendegreelemma} 
which explicitly calculates
$\alpha_{a,b}, \beta_{a,b}$ and $\gamma_{a,b}$ for all $(a,b) \preceq (i+1,j+1)$.
\end{proof}

\begin{example}
Suppose $X$ is a set of $6$ points in
sufficiently general position in ${\mathbb P}^1 \times {\mathbb P}^1$. Then the associated Hilbert function and second difference function can be found in Example \ref{runningexample}.
We first note that $|X| = (1+1)(2+1)$, so we will take $(i,j) = (1,2)$.  We also 
have $|X| =2\cdot 3 + 0$, so $r=0$ and $q=2$ in this case.  So, 
by  Theorem \ref{maintheoremtrim},

$$0 \rightarrow S(-2,-3)^4 \rightarrow S(-2,-2)^3\oplus S(-1,-3)^2
\rightarrow S \rightarrow S/I_X \rightarrow 0$$

is a virtual resolution of $S/I_X$.
To see why this is true in this
special case, note that $H_X(1,2) = 6$ so $(1,2) \in {\rm reg}_B(S/I_X)$.
We are thus trimming the resolution of $S/I_X$ in Example \ref{runningexample}
using $(1,2)+(1,1) =(2,3)$.
\end{example}

\begin{remark}
Because of the symmetry of $H_X$, we can make a statement 
similar to Theorem \ref{maintheoremtrim} that assumes
$j < i$.  In this case, the bidegrees need to be swapped and the roles
of $i$ and $j$ are also swapped.
\end{remark}

\begin{remark}
Computer experiments with Macaulay2 suggest that one might be able
to get additional virtual resolutions from $\Delta^2 H_X$.  In particular,
we have observed that if $X$ is in sufficiently general position 
with  $\Delta^2 H_X = (d_{i,j})$, and   
if $$(i,j) \in \min\{(a,b) ~|~ H_X(a,b) = |X| \},$$ then there exists a virtual resolution of $S/I_X$ of length two of the following form: 

\footnotesize
\begin{multline*}
0 \rightarrow 
\bigoplus_{
\begin{array}{c}
(0,0) \prec (a,b) \preceq (i+1,j+1) \\
d_{a,b} > 0
\end{array}}
S(-a,-b)^{d_{a,b}} \rightarrow
\\
\bigoplus_{
\begin{array}{c}
(a,b)  \preceq (i+1,j+1) \\
d_{a,b} < 0     
\end{array}}
S(-a,-b)^{|d_{a,b}|} 
\rightarrow S \rightarrow S/I_X \rightarrow 0.
\end{multline*}
\normalsize

Note that if $(i+1)(j+1) = |X|$, then $(i,j) \in \min\{(a,b) ~|~ H_X(a,b) = |X| \}$.  The above formula agrees with
Theorem
\ref{maintheoremtrim}.
\end{remark}

\section{Bounding a result of Berkesch, Erman, and Smith}
\label{boundsection}

In Section \ref{trimmingsection} we used 
Berkesch, Erman, and Smith's 
trimming construction
to find a short virtual resolution of $S/I_X$ for a
set of points $X$ in 
sufficiently general position in $\mathbb{P}^1 \times \mathbb{P}^1$. 
For zero-dimensional schemes 
in $\mathbb{P}^{n_1} \times \cdots 
\times \mathbb{P}^{n_r}$,
Berkesch, Erman, and Smith 
also proved the existence of
short virtual resolutions using the 
minimal free resolution of a specific module.
 We state below a special case of their
result (see \cite[Theorem 4.1]{Virtual}) for the case of points in $\mathbb{P}^1 \times  \mathbb{P}^1$.

\begin{theorem}\label{theorem: shorten}
Let $X$ be a set of points in $\mathbb{P}^1 \times
\mathbb{P}^1$.  Then for all $a \gg 0$, the 
minimal free resolution of 
$S/(I_X \cap \langle x_0,x_1 \rangle^a)$ is a virtual
resolution of $S/I_X$ of length two.
\end{theorem}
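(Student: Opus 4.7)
The plan is to apply Theorem~\ref{virtualresideals} with $J := I_X \cap \langle x_0, x_1 \rangle^a$. This reduces the theorem to two statements: (i) $J : B^\infty = I_X$, and (ii) the minimal free resolution of $S/J$ has length two for $a$ sufficiently large.

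Statement (i) is essentially formal. Since $B = \langle x_0 y_0, x_0 y_1, x_1 y_0, x_1 y_1 \rangle \subseteq \langle x_0, x_1 \rangle$, we have $B^a \subseteq \langle x_0, x_1 \rangle^a$, whence $\langle x_0, x_1 \rangle^a : B^\infty = S$. Combined with the $B$-saturation of $I_X$ from Lemma~\ref{P1xP1facts}(3) and the standard identity $(I \cap K) : L = (I : L) \cap (K : L)$, this yields $J : B^\infty = I_X$ for every $a \geq 1$.

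Statement (ii) is the main obstacle, and the place where the hypothesis $a \gg 0$ enters. By the Auslander-Buchsbaum theorem, it suffices to produce a regular sequence of length two on $S/J$. The first element is straightforward: $y_0$ is a non-zero-divisor on $S/I_X$ by Lemma~\ref{P1xP1facts}(1) together with Remark~\ref{convention}, and on $S/\langle x_0, x_1 \rangle^a$ because $y_0$ lies outside the unique associated prime $\langle x_0, x_1 \rangle$ of that primary ideal; hence it is a non-zero-divisor on the intersection $S/J$. The hard part will be producing a second non-zero-divisor on $\overline{S}/\overline{J}$, where $\overline{S} = S/\langle y_0 \rangle$.

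To handle the second regular element, I would exploit the observation that $J$ admits a clean description as a bihomogeneous truncation of $I_X$: because the bigraded piece $(\langle x_0, x_1\rangle^a)_{(i,j)}$ equals $S_{(i,j)}$ when $i \geq a$ and is zero when $i < a$, one has $J_{(i,j)} = (I_X)_{(i,j)}$ for $i \geq a$ and $J_{(i,j)} = 0$ otherwise. For $a$ large enough that $(a,0)$ lies past the multigraded regularity of $S/I_X$ in the first coordinate, the truncation theory of Maclagan-Smith forces the projective dimension of this quotient to drop to at most two. Alternatively, one can apply the depth lemma to the Mayer-Vietoris sequence
\[
0 \to S/J \to S/I_X \oplus S/\langle x_0, x_1 \rangle^a \to S/(I_X + \langle x_0, x_1 \rangle^a) \to 0;
\]
here the subtlety is to bound $\mathrm{depth}(S/(I_X + \langle x_0, x_1 \rangle^a))$ from below for $a \gg 0$ via local cohomology, using that this quotient is supported on a neighborhood of the unstable locus $V(\langle x_0, x_1 \rangle)$ and that $\langle x_0, x_1\rangle^a$-primary contributions cannot disturb the depth once $a$ is taken large.
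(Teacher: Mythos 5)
Your reduction and the first half of the argument are sound, and your treatment of the saturation identity $J : B^\infty = I_X$ via $(I\cap K):B^\infty = (I:B^\infty)\cap(K:B^\infty)$ together with $\langle x_0,x_1\rangle^a : B^\infty = S$ is a clean (and slightly different) route to what the paper does via the chain $I_X B^a \subseteq I_X\cap\langle x_0,x_1\rangle^a$. Your identification of $y_0$ as the first non-zero-divisor on $S/J$ also matches the paper. The problem is the second regular element, which is exactly where the content of the theorem lives, and neither of your two proposed finishes closes the gap.

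The Mayer--Vietoris route cannot work as stated. Applying the depth lemma to
\[
0 \to S/J \to S/I_X \oplus S/\langle x_0,x_1\rangle^a \to S/(I_X+\langle x_0,x_1\rangle^a) \to 0
\]
gives $\operatorname{depth}(S/J) \geq \min\bigl\{\operatorname{depth}(S/I_X \oplus S/\langle x_0,x_1\rangle^a),\ \operatorname{depth}(S/(I_X+\langle x_0,x_1\rangle^a))+1\bigr\}$, and the first entry is $\min\{\operatorname{depth}(S/I_X), 2\}$. By Lemma~\ref{P1xP1facts}(2), $\operatorname{depth}(S/I_X)$ is often $1$ (indeed for points in sufficiently general position the resolution of $S/I_X$ has length three, so depth one), so that minimum is $1$ and the inequality only yields $\operatorname{depth}(S/J)\geq 1$, no matter how large $a$ is or how well you control the depth of $S/(I_X+\langle x_0,x_1\rangle^a)$. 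The truncation remark is also not a proof: it is true that $J_{(i,j)} = (I_X)_{(i,j)}$ for $i\geq a$ and $0$ for $i<a$, and this truncation picture is morally what is going on, but ``the truncation theory of Maclagan--Smith forces the projective dimension to drop'' is not a citable statement; that is essentially what needs to be proved.

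The paper's proof of the effective version (Theorem~\ref{maintheorem2}) supplies the missing ingredient through Lemma~\ref{keylemma}: for $a\geq\lvert\pi_1(X)\rvert-1$ there is an explicit primary decomposition
\[
\langle I_X \cap \langle x_0,x_1\rangle^a,\ y_0\rangle \;=\; \bigcap_{A_k\in\pi_1(X)}\langle y_0, y_1^{\alpha_k}, L_{A_k}\rangle \;\cap\; \langle\langle x_0,x_1\rangle^a, y_0\rangle,
\]
from which one reads off the associated primes and exhibits a concrete second non-zero-divisor of the form $L+y_1$ with $L$ of degree $(1,0)$ avoiding $\pi_1(X)$. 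That decomposition is the genuine content: the difficult inclusion (RHS $\subseteq$ LHS) is checked graded-piece by graded-piece using Corollary~\ref{specialvaluesofHX2} and Lemma~\ref{pointsonruling}, which is where the bound $a\geq\lvert\pi_1(X)\rvert-1$ enters. To complete your proposal you would need to either reproduce this decomposition or substitute some other concrete argument that $\operatorname{depth} S/\langle J, y_0\rangle \geq 1$; the depth lemma and a vague appeal to truncation theory do not do it.
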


The purpose of this section is to improve the above
result of Berkesch, Erman, and Smith by giving an explicit bound on the $a$ that satisfies the claim in Theorem~\ref{theorem: shorten}.   
Specifically, we prove the following result. (Recall that 
$\pi_1(X)$ denotes the set of distinct first coordinates appearing in $X$.)

\begin{theorem}\label{maintheorem2}
Let $X$ be a set of points in $\mathbb{P}^1 \times 
\mathbb{P}^1$.  For all $a \geq |\pi_1(X)|-1$, the
minimal free resolution of $S/(I_X \cap \langle x_0,x_1 \rangle^a)$
is a virtual resolution of $S/I_X$ of length two.
\end{theorem}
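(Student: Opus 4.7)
The plan is to apply Theorem~\ref{virtualresideals} to $J := I_X \cap \langle x_0, x_1\rangle^a$, for which two things must be checked: that $J : B^\infty = I_X$, and that the minimal free resolution of $S/J$ has length two.

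For the first condition, since $B = \langle x_0 y_0, x_0 y_1, x_1 y_0, x_1 y_1\rangle \subseteq \langle x_0, x_1\rangle$, we have $B^a \subseteq \langle x_0, x_1\rangle^a$, and hence $B^a \cdot I_X \subseteq I_X \cap \langle x_0, x_1\rangle^a = J$, giving $I_X \subseteq J : B^a \subseteq J : B^\infty$. The reverse inclusion $J : B^\infty \subseteq I_X$ is immediate since $J \subseteq I_X$ and $I_X$ is $B$-saturated by Lemma~\ref{P1xP1facts}(3). With this in hand, Theorem~\ref{virtualresideals} gives that the minimal free resolution of $S/J$ is a virtual resolution of $S/I_X$.

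For the length-two statement, I would argue that $S/J$ is Cohen-Macaulay, so that by the Auslander-Buchsbaum formula $\mathrm{pd}(S/J) = 2$. Since $\mathrm{Ass}(S/J) \subseteq \mathrm{Ass}(S/I_X) \cup \mathrm{Ass}(S/\langle x_0, x_1\rangle^a) = \{I_{P_1}, \ldots, I_{P_s}, \langle x_0, x_1\rangle\}$ consists only of height-two primes, $\dim(S/J) = 2$, so it suffices to exhibit a regular sequence of length two on $S/J$. First, $y_0$ is a non-zero-divisor on $S/J$: Remark~\ref{convention} gives $y_0 \notin I_{P_i}$ for each $i$, and obviously $y_0 \notin \langle x_0, x_1\rangle$. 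Second, I would produce a non-zero-divisor on $S/(J + \langle y_0\rangle)$ via a Hilbert-function analysis. The Hilbert function of $S/J$ can be read off directly from the definition: for $i < a$ we have $(\langle x_0, x_1\rangle^a)_{i, *} = 0$, forcing $J_{i, *} = 0$ and $H_{S/J}(i, j) = (i+1)(j+1)$; for $i \geq a$ we have $(\langle x_0, x_1\rangle^a)_{i, *} = S_{i, *}$, forcing $J_{i, *} = (I_X)_{i, *}$ and $H_{S/J}(i, j) = H_{S/I_X}(i, j)$. Using the non-zero-divisor $y_0$ to take differences in $j$, together with Corollary~\ref{specialvaluesofHX2} in the range $i \geq a \geq |\pi_1(X)| - 1$, this yields an explicit formula
\[ H_{S/(J + \langle y_0\rangle)}(i, j) = \begin{cases} i + 1, & i < a, \\ \alpha^\star_{j+1}, & i \geq a. \end{cases} \]

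The main obstacle will be converting this Hilbert-function data into the depth conclusion, namely that the maximal ideal $\mathfrak{m} = \langle x_0, x_1, y_0, y_1\rangle$ is not an associated prime of $J + \langle y_0\rangle$ (equivalently, $\mathrm{depth}(S/(J + \langle y_0\rangle)) \geq 1$). My intended strategy is to apply the depth lemma to the Mayer-Vietoris short exact sequence $0 \to S/J \to S/I_X \oplus S/\langle x_0, x_1\rangle^a \to S/(I_X + \langle x_0, x_1\rangle^a) \to 0$, with the bound $a \geq |\pi_1(X)|-1$ entering through Corollary~\ref{specialvaluesofHX2} to control the depth of the right-hand term, thereby producing the desired lower bound $\mathrm{depth}(S/J) \geq 2$.
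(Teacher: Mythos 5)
Your first half matches the paper closely: the saturation identity $J:B^\infty = I_X$ with $J = I_X \cap \langle x_0,x_1\rangle^a$ follows from $B^a \subseteq \langle x_0,x_1\rangle^a$ plus $B$-saturation of $I_X$, and invoking Theorem~\ref{virtualresideals} is exactly the paper's move. Your dimension count via $\operatorname{Ass}(S/J)\subseteq\operatorname{Ass}(S/I_X)\cup\operatorname{Ass}(S/\langle x_0,x_1\rangle^a)$ is a clean alternative to the paper's Nullstellensatz argument, and your observation that $y_0$ remains a non-zero-divisor on $S/J$ is the same. Your Hilbert function computation for $S/(J+\langle y_0\rangle)$ is also correct and essentially the same data the paper uses (via Corollary~\ref{specialvaluesofHX2}).

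The gap is in the final depth step. The Mayer--Vietoris sequence
$0 \to S/J \to S/I_X \oplus S/\langle x_0,x_1\rangle^a \to S/(I_X+\langle x_0,x_1\rangle^a) \to 0$
cannot deliver $\operatorname{depth}(S/J)\geq 2$ via the depth lemma. The depth lemma gives $\operatorname{depth}(S/J) \geq \min\{\operatorname{depth}(S/I_X\oplus S/\langle x_0,x_1\rangle^a),\ \operatorname{depth}(S/(I_X+\langle x_0,x_1\rangle^a))+1\}$, and the middle term has depth $\min\{\operatorname{depth}(S/I_X),\ 2\}$. But in the cases where the theorem is actually doing work---e.g.\ points in sufficiently general position, as in Example~\ref{runningexample}---$S/I_X$ has projective dimension $3$, hence depth $1$ by Auslander--Buchsbaum. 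The depth lemma then only yields $\operatorname{depth}(S/J)\geq 1$, regardless of how well Corollary~\ref{specialvaluesofHX2} controls the depth of the right-hand term. Knowing the Hilbert function of $S/(J+\langle y_0\rangle)$ also does not by itself rule out depth $0$ (a socle element is invisible to the Hilbert function). This is precisely why the paper proves Lemma~\ref{keylemma}: an explicit primary decomposition of $\langle I_X\cap\langle x_0,x_1\rangle^a, y_0\rangle$, valid once $a\geq|\pi_1(X)|-1$. The decomposition identifies the associated primes as $\langle y_0,y_1,L_{A_k}\rangle$ and $\langle x_0,x_1,y_0\rangle$, all of height $3$, so an element of the form $L+y_1$ avoiding them is a non-zero-divisor on $S/(J+\langle y_0\rangle)$, completing the regular sequence of length $2$. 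The Hilbert function computation you carried out is used inside the proof of that lemma (to establish the reverse containment of the decomposition) rather than to conclude depth directly. Without some analogue of this primary decomposition, the argument does not close.
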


The key lemma, whose proof we postpone until the
end of this section, is the following ideal
decomposition.

\begin{lemma}\label{keylemma}
Let $X$ be any set of points in $\mathbb{P}^1 \times 
\mathbb{P}^1$ with $\pi_1(X)  =
\{A_1,\ldots,A_\ell\}$ and $\alpha_k 
:= |\pi_1^{-1}(A_k) \cap X|$.  Suppose that 
$y_0$ is a non-zero-divisor on $S/I_X$.
Then for any integer $a \geq |\pi_1(X)|-1$, 
the equality 
\begin{equation}\label{eq: keylemma}
\langle I_X \cap \langle x_0,x_1 \rangle^a,y_0 \rangle = \bigcap_{A_k \in \pi_1(X)}
\langle y_0,y_1^{\alpha_k},L_{A_k} \rangle \cap \langle \langle x_0,x_1\rangle ^a,y_0\rangle
\end{equation}
is a primary decomposition of $\langle I_X \cap \langle x_0,x_1 \rangle^a,y_0 \rangle$, 
where $L_{A_k} = a_{k,1}x_0-a_{k,0}x_1$ if $A_k = [a_{k,0}:a_{k,1}]$.
\end{lemma}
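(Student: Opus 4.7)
The plan is to work modulo $y_0$ throughout, passing to the quotient ring $R := S/\langle y_0 \rangle \cong \Bbbk[x_0, x_1, y_1]$. By Lemma~\ref{pointsonruling} combined with Remark~\ref{convention} (which guarantees $L_{B_{k,i}} \equiv -b_{k,i,0}\, y_1 \pmod{y_0}$ with $b_{k,i,0} \neq 0$), one has the identity $\langle I_{X_k}, y_0\rangle = \langle y_0, y_1^{\alpha_k}, L_{A_k}\rangle$ in $S$. I would first verify that the right-hand side of~\eqref{eq: keylemma} is indeed a primary decomposition: each $Q_k := \langle y_0, y_1^{\alpha_k}, L_{A_k}\rangle$ is primary to $\langle y_0, y_1, L_{A_k}\rangle$, while $Q_0 := \langle \langle x_0, x_1\rangle^a, y_0 \rangle$ is primary to $\langle y_0, x_0, x_1\rangle$, and the $|\pi_1(X)|+1$ associated primes are pairwise distinct.

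For the containment LHS $\subseteq$ RHS, it suffices to check that LHS lies inside each primary component. The inclusion $I_X \subseteq I_{X_k}$ gives $\langle I_X, y_0\rangle \subseteq \langle I_{X_k}, y_0 \rangle = Q_k$, and $I_X \cap \langle x_0, x_1\rangle^a \subseteq \langle x_0, x_1\rangle^a$ gives $\langle I_X \cap \langle x_0, x_1\rangle^a, y_0\rangle \subseteq Q_0$.

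The reverse containment is the heart of the argument. Given $f$ in the RHS, since $f \in Q_0$ one may write $f = h + y_0 r$ with $h \in \langle x_0, x_1\rangle^a$. It suffices to find $g \in I_X \cap \langle x_0, x_1\rangle^a$ with $\overline{g} = \overline{h}$ in $R$, for then $f - g \in \langle y_0\rangle$, placing $f$ in the LHS. The condition $f \in \bigcap_k Q_k$ combined with $h \in \langle x_0, x_1\rangle^a$ yields $\overline{h} \in \bigcap_k \langle L_{A_k}, y_1^{\alpha_k}\rangle \cap \langle x_0, x_1\rangle^a R$. Thus the whole argument reduces to the following \emph{Key Claim}: for $a \geq |\pi_1(X)|-1$, every element of $\bigcap_k \langle L_{A_k}, y_1^{\alpha_k}\rangle \cap \langle x_0, x_1\rangle^a R$ lifts, modulo $y_0$, to an element of $I_X \cap \langle x_0, x_1\rangle^a$.

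The Key Claim is the main obstacle. My strategy is to expand an arbitrary element of the left-hand side as $f = \sum_j f_j \cdot y_1^j$ with $f_j \in \Bbbk[x_0, x_1]$. The intersection condition forces $f_j \in \langle \prod_{k\,:\,\alpha_k > j} L_{A_k}\rangle$ (using that the $L_{A_k}$ are pairwise coprime linear forms in $\Bbbk[x_0, x_1]$), so in particular $f_0$ is divisible by $\prod_{k=1}^{\ell} L_{A_k}$ and has $x$-degree at least $\ell := |\pi_1(X)|$. The hypothesis $a \geq \ell - 1$ is precisely what makes the additional $x$-degree requirement from $\langle x_0, x_1\rangle^a R$ compatible with constructing an explicit lift of each $f_j y_1^j$ into $I_X \cap \langle x_0, x_1\rangle^a$, using the generators $L_{A_k}$ and $h_k = L_{B_{k,1}} \cdots L_{B_{k,\alpha_k}}$ of each $I_{X_k}$; the careful bookkeeping of how the $x$-degree deficit for small $j$ is absorbed by the extra $L_{A_k}$ factors is where the bulk of the technical work lies.
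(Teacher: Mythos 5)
Your setup is sound and your reduction is correct: passing to $R = S/\langle y_0 \rangle$, verifying the primary decomposition structure, and reducing the reverse containment to your \emph{Key Claim} (that every element of $\bigcap_k \langle L_{A_k}, y_1^{\alpha_k} \rangle \cap \langle x_0,x_1 \rangle^a R$ lifts modulo $y_0$ to $I_X \cap \langle x_0,x_1\rangle^a$) are all legitimate steps. The inclusion LHS $\subseteq$ RHS is handled essentially as in the paper, and your observation that $f_j \in \langle \prod_{k: \alpha_k > j} L_{A_k} \rangle$ is correct.

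However, there is a genuine gap: the Key Claim, which you yourself identify as ``the main obstacle'' and ``where the bulk of the technical work lies,'' is never actually proven. You sketch a strategy (expand in powers of $y_1$, track divisibility of $f_j$ by the relevant $L_{A_k}$, then construct a lift from the generators of the $I_{X_k}$) but do not carry it out, and the ``careful bookkeeping'' is precisely the content that must be supplied. Without it, the reverse containment is not established. It is worth noting that the paper sidesteps the explicit lifting entirely: it works bidegree by bidegree, observes that in bidegrees $(i,j)$ with $i < a$ the containment is trivial because anything in $\langle\langle x_0,x_1\rangle^a,y_0\rangle$ of such bidegree lies in $\langle y_0\rangle$, and then in bidegrees with $i \geq a$ reduces to showing $\bigl[\bigcap_{\alpha_k > j} \langle y_0, L_{A_k}\rangle\bigr]_{i,j} \subseteq [\langle I_X, y_0 \rangle]_{i,j}$ by a Hilbert-function dimension count, using Corollary~\ref{specialvaluesofHX2} and Lemma~\ref{pointsonruling} to show both vector spaces have dimension $\dim_\Bbbk S_{i,j} - \alpha^\star_{j+1}$. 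You may find that replacing your Key Claim with this dimension-count argument is cleaner than completing the explicit lift; if you do wish to pursue the lift, the missing content is precisely a careful CRT-style argument (using that the $L_{A_k}$ are pairwise coprime) producing, for each bidegree, an element of $I_X$ congruent to $h$ modulo $y_0$ that also lands in $\langle x_0,x_1\rangle^a$, and verifying that the hypothesis $a \geq \ell - 1$ suffices for this.
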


Assuming the above lemma, we prove our main result of this section, Theorem~\ref{maintheorem2}. 

\begin{proof}[Proof of Theorem \ref{maintheorem2}]

We first claim that for all integers $a$, the
minimal free resolution of 
$S/(I_X \cap \langle x_0,x_1\rangle^a)$ is a 
virtual resolution of $S/I_X$.
To see this, note that 
by Lemma \ref{P1xP1facts}, the ideal $I_X$ is
$B$-saturated.   Moreover, we have
the identity
\begin{equation}\label{eq: B-sat of IX}
(I_X \cap \langle x_0,x_1 \rangle^a):B^\infty = I_X
~~\mbox{for all integers $a \geq 0$}.
\end{equation}
Indeed, because $(I_X \cap \langle x_0,x_1\rangle^a) \subseteq I_X$, we get
$(I_X \cap \langle x_0,x_1 \rangle^a):B^\infty 
\subseteq I_X:B^\infty = I_X,$
where we are using the fact that
$I_X$ is $B$-saturated by Lemma \ref{P1xP1facts} (3).  For the reverse inclusion, it is enough to note that
$I_XB^a \subseteq I_X \cap B^a 
\subseteq (I_X \cap \langle x_0,x_1 \rangle^a)$.  Our claim
now follows from Theorem \ref{virtualresideals}.


It now suffices to show that for any $a \geq \lvert \pi_1(X) \rvert -1$, the minimal free resolution of $S/(I_X \cap \langle x_0, x_1 \rangle^a)$ has length $2$.
We first note that 
the Krull dimension of $S/(I_X \cap \langle x_0,x_1 \rangle^a)$ is $2$.  This can be seen geometrically, as follows. If we
ignore the bigrading, the zero-locus $Z = Z(I_X \cap \langle x_0,x_1 \rangle^a)$ defines an algebraic set in $\mathbb{A}^4$.  By
the Nulstellenstaz, 
$$Z = Z(\sqrt{I_X \cap \langle x_0,x_1 \rangle^a}) = Z(\sqrt{I_X} \cap
\sqrt{\langle x_0,x_1 \rangle^a}) = Z(I_X \cap \langle x_0,x_1 \rangle)$$
since $I_X = \sqrt{I_X} = \bigcap_{i=1}^s I_{P_i}$ is the intersection of prime ideals.
The ideals $I_{P_i}$ for 
$i=1,\ldots,s$ and the ideal
$\langle x_0,x_1 \rangle$ define planes in $\mathbb{A}^4$.  So,
$Z$ is union of planes in $\mathbb{A}^4$.  Hence
$\text{K-dim} (S/(I_X \cap \langle x_0,x_1 \rangle^a)) = \dim Z = 2$.
Because ${\rm depth}(M) \leq \mbox{K-dim}(M)$ for any $S$-module $M$
(e.g., see \cite[Lemma 2.3.6]{V}), we conclude that ${\rm depth}(S/(I_X \cap \langle x_0,x_1 \rangle^a) \leq 2$.  

To complete the
proof, it suffices to show that for all 
$a \geq |\pi_1(X)| -1$, the depth is exactly two.
Indeed, if we can show this fact, then the
Auslander-Buchsbaum Theorem implies that the 
projective dimension of $S/(I_X \cap \langle x_0,x_1 \rangle^a)$ is two, implying (by definition) that its minimal free resolution is of length $2$, as desired. 

We now show that for 
all 
$a \geq |\pi_1(X)| -1$, the depth is exactly two.
By Remark \ref{convention}, we can assume
that $y_0$ is a non-zero-divisor of $S/I_X$.  
In fact, $y_0$ is also a non-zero-divisor of
$S/(I_X \cap \langle x_0,x_1\rangle^a)$ because
if $y_0H \in (I_X \cap \langle x_0,x_1 \rangle^a)$,
we have $y_0H \in I_X$ and $y_0H \in \langle x_0,x_1 \rangle^a$, and this can only happen if 
$H \in I_X \cap \langle x_0,x_1 \rangle^a$.

Next, we claim that the depth of 
$S/(I_X \cap \langle x_0, x_1 \rangle^a, y_0)$ is not zero. 
This would show that there is a regular sequence of at least $2$ in $S/(I_X \cap \langle x_0, x_1 \rangle^a)$ (the first element of which is $y_0$), which would prove that the depth is $\geq 2$ as claimed. 
Because $a \geq |\pi_1(X)|-1$, Lemma \ref{keylemma} gives a primary decomposition of $\langle I_X \cap \langle x_0, x_1 \rangle^a,y_0\rangle$.
The zerodivisors of
$S/\langle I_X \cap \langle x_0, x_1 \rangle^a,y_0\rangle$ are 
the elements in the union of the associated primes (e.g.,
see \cite[Lemma 2.1.19]{V}), that is, the elements of the set
\begin{equation}\label{nzd}
\left(\bigcup_{A_k \in \pi_1(X)} \langle y_0,y_1,L_{A_k} \rangle
\right) \cup \langle x_0,x_1,y_0 \rangle.
\end{equation}
Let $L$ be any element with $\deg L = (1,0)$ such that $L$ does
not vanish at any point in $\pi_1(X)$.  Then $L+y_1$ is not
in the union ~\eqref{nzd}, so $L+y_1$ is a non-zero-divisor 
of $S/\langle I_X \cap \langle x_0, x_1 \rangle^a,y_0\rangle$.
 As we saw above, this implies that ${\rm depth}(S/(I_X \cap \langle x_0, x_1 \rangle^a)) \geq 2$. Putting this together with the fact that 
${\rm depth}(S/(I_X \cap \langle x_0,x_1 \rangle^a)) \leq \mbox{K-dim} (S/(I_X \cap \langle x_0,x_1 \rangle^a)) =2$  we obtain that the depth is $2$, as desired. This now completes the proof of the theorem. 
\end{proof}

It remains to prove Lemma \ref{keylemma}, which we now do.

\begin{proof}[Proof of Lemma \ref{keylemma}] 
We start with some observations about our set of points $X$ and associated defining ideal
$I_X$. By definition of the $\alpha_k$'s, for 
each $A_k \in \pi_1(X)$,  there
exist $\alpha_k$ many points $\{B_{k,1}, \ldots, B_{k,\alpha_k}\}$ in $\mathbb{P}^1$ such that
$$X_k:= \{A_k \times B_{k,1},A_k \times B_{k,2}, \ldots, A_k \times B_{k, \alpha_k}\} \subseteq
X$$
and all other points in $X$ have first coordinate differing from $A_k$. Thus $X_k$ is exactly the subset of $X$ consisting of points with first coordinate equal to $A_k$. 
Consequently, $I_X = \bigcap_{k=1}^\ell I_{X_k}$.  By
Lemma \ref{pointsonruling}, for each $k$, $I_{X_k} = \langle F_k, G_k \rangle$ where
$\deg F_k = (1,0)$ and $\deg G_k = (0,\alpha_k)$.  Furthermore, by Remark \ref{convention},
we can assume that no second coordinate has the form $[0:1]$, and consequently,
by Lemma \ref{pointsonruling} we have
$y_0\nmid G_k$ for all $k$.

In general, for any three ideals $I,J,K$, we always have $(I \cap J)+K \subseteq (I+K) \cap (J+K)$.  
It follows that for all integers $a \geq 0$
\begin{equation}\label{eq: distribute}
\begin{split} 
\langle I_X \cap \langle x_0,x_1 \rangle^a, y_0 \rangle & = 
\langle I_{X_1} \cap \cdots \cap  I_{X_\ell} \cap \langle x_0,x_1 \rangle^a, y_0 \rangle \\
& \subseteq  \langle I_{X_1},y_0 \rangle \cap \cdots \cap \langle I_{X_\ell},y_0 \rangle \cap
\langle \langle x_0,x_1 \rangle^a, y_0 \rangle.
\end{split} 
\end{equation}
From Lemma~\ref{pointsonruling} we know that for any $k$ we have 
\begin{equation}\label{eq: IXk y0}
\langle I_{X_k},y_0 \rangle = \langle L_{A_k},L_{B_{k,1}}\cdots L_{B_{k, \alpha_k}},y_0 \rangle
= \langle y_0,y_1^{\alpha_k}, L_{A_k} \rangle
\end{equation}
where the second equality holds because 
$L_{B_{k,1}}\cdots L_{B_{k,\alpha_k}}$ is a homogeneous 
polynomial of degree $\alpha_k$ only in the variables $y_0$ and $y_1$, and the
coefficient of $y_1^{\alpha_k}$ is not zero.  Consequently, combining~\eqref{eq: distribute} and~\eqref{eq: IXk y0}, it follows that the 
LHS of~\eqref{eq: keylemma} is contained in its RHS, for any $a \geq 0$. 

We now wish to show the reverse containment, i.e. the RHS of~\eqref{eq: keylemma} is contained in the LHS, for all $a \geq \lvert \pi_1(X) \rvert -1$. 
Since both ideals are bihomogeneous, it suffices to show the inclusion on each $(i,j)$-th graded piece, that is, to show that
\begin{equation}\label{eq: reverse inclusion}
\left[ \bigcap_{A_k \in \pi_1(X)}
\langle y_0,y_1^{\alpha_i},L_{A_k} \rangle \cap \langle \langle x_0,x_1 \rangle^a,y_0\rangle\right]_{i,j} \subseteq \left[\langle I_X \cap \langle x_0,x_1 \rangle^a,y_0 \rangle \right]_{i,j}
\end{equation}
for all $(i,j) \in \mathbb{N}^2$.   We now consider two cases: (1) $0 \leq i <a$ and (2) $a \leq i$.  

We first argue for case (1). Suppose that $0 \leq i < a$ and suppose that $F$ lies in the LHS of~\eqref{eq: reverse inclusion}. 
In particular, $F \in \langle \langle x_0,x_1 \rangle^a,y_0 \rangle$.  
Since $0 \leq i<a$ by assumption, we conclude that $F \in \langle y_0 \rangle$. But then $F \in \left[\langle I_X \cap \langle x_0,x_1 \rangle^a,y_0 \rangle \right]_{i,j}$ also, so~\eqref{eq: reverse inclusion} holds. 

For case (2), we have $a \leq i$, so $\left[\langle x_0,x_1 \rangle^a\right]_{i,j} =S_{i,j}$.  Consequently  
$$\langle \langle x_0,x_1 \rangle^a,y_0 \rangle_{i,j} = S_{i,j}
~~\mbox{and}
\left[ \langle I_X \cap \langle x_0,x_1 \rangle^a,y_0 \rangle \right]_{i,j} = 
\left[\langle I_X,y_0 \rangle\right]_{i,j}.$$
Thus, to prove~\eqref{eq: reverse inclusion} in this case it suffices to check that
\begin{equation}\label{eq: rev incl take 2} 
\left[ \bigcap_{A_k \in \pi_1(X)}
\langle y_0,y_1^{\alpha_k},L_{A_k} \rangle \right]_{i,j} \subseteq \left[\langle I_X,y_0 \rangle \right]_{i,j}.
\end{equation}
We now analyze the ideals appearing in the intersection in the LHS of~\eqref{eq: rev incl take 2} individually. Fix a  $k$
with $1 \leq k \leq \ell$. We consider cases. Suppose first that $\alpha_k \leq j$. In this case 
$\left[\langle y_0,y_1^{\alpha_k},L_{A_k} \rangle\right]_{i,j} = S_{i,j}$ since 
every monomial of degree $(i,j)$ with $j \geq \alpha_k$ is either divisible by $y_0$
or $y_1^{\alpha_k}$. 
On the other hand, suppose that $j < \alpha_k$. Then we have
$$\left[\langle y_0,y_1^{\alpha_i},L_{A_i} \rangle\right]_{i,j} = 
\left[\langle y_0,L_{A_i} \rangle\right]_{i,j}$$
since no monomial containing $y^{\alpha_k}$ contributes to the $(i,j)$-th graded piece of
the ideal if $\alpha_k > j$. In summary, we have shown that to prove~\eqref{eq: rev incl take 2} it suffices to show 
\begin{equation}\label{vs}
\left[ \bigcap_{
\scriptsize{
\begin{array}{c}
A_k \in \pi_1(X) \\
\alpha_k > j
\end{array}}}
\langle y_0,L_{A_k} \rangle \right]_{i,j} \subseteq \left[\langle I_X,y_0 \rangle \right]_{i,j}.
\end{equation}
Because the vector space on the RHS of ~\eqref{vs} is a subspace
of the vector space on the LHS, it is enough
to show that the two vector spaces in ~\eqref{vs} have the same
dimension.

The ideal on the LHS of~\eqref{vs} is the ideal of the points
$$Y = \{A_k \times [0:1] ~|~ \alpha_k > j \} \subseteq \mathbb{P}^1 \times \mathbb{P}^1.$$
The number of points in this set is the number of $\alpha_k \in \alpha_X$
with $\alpha_k \geq j+1$ which is by definition $\alpha^\star_{j+1}$.  Moreover,
by Lemma \ref{pointsonruling} (and Remark \ref{swapruling}), we have
$H_Y(i,j) = \alpha^\star_{j+1}$ since $i \geq a \geq |\pi(X)|-1 = \alpha_1^{\star}-1$ by assumption
and $\alpha_1^\star \geq \alpha_{j+1}^\star$ by properties of partitions.  On the other hand,
because $y_0$ is a non-zero-divisor 
and since $i \geq |\pi_1(X)|-1$, by Corollary \ref{specialvaluesofHX2} we also have
$$H_{S/\langle I_X,y_0 \rangle}(i,j) = \alpha_{j+1}^\star.$$
Since $H_Y(i,j) = \dim_{\Bbbk} S_{i,j} - \dim_{\Bbbk} [I_Y]_{i,j}$ and 
$H_{S/\langle I_X,y_0 \rangle}(i,j) =\dim_{\Bbbk} S_{i,j} 
-\dim_{\Bbbk} [\langle I_X,y_0\rangle]_{i,j}$, we can conclude that the two vector spaces in \eqref{vs} have the same dimension, as desired.

To complete the proof, we show that all the ideals
on the RHS are primary. Recall that a monomial ideal 
$Q = \langle m_1,\ldots,m_t \rangle$ in a polynomial ring 
$\Bbbk[z_1,\ldots,z_n]$ is a primary ideal if and only if, after 
a permutation of the variables, 
$Q = \langle z_1^{a_1},\ldots,z_r^{a_r},m'_1,\ldots,m'_p\rangle$
and the only variables that divide $m'_1,\ldots,m'_p$
are $z_1,\ldots,z_r$ (e.g, see
\cite[Proposition 6.1.7]{V}).

It follows that the ideal
$\langle \langle x_0,x_1 \rangle^a,y_0 \rangle$ is 
a primary monomial ideal, since the generators
are $x_0^a,x_1^a,y_0$ and monomials of the
form $x_0^bx_1^c$ with $b+c = a$ and $b,c >0$.  For
the ideals $\langle y_0,y_1^{\alpha_k},L_{A_k}=a_{k,1} x_0 - a_{k,0}x_1 \rangle$,
if $a_{k,0} =0$, then this is also a primary monomial ideal.
If $a_{k,0} \neq 0$, then 
$$S/\langle y_0,y_1^{\alpha_k},a_{k,1}x_0-a_{k,0}x_1 \rangle
= S/\langle y_0,y_1^{\alpha_k},\frac{a_{k,1}}{a_{k,0}}x_0-x_1 \rangle \cong \Bbbk[x_0,y_0,y_1]/\langle y_0,y_1^{\alpha_k},x_0  \rangle.$$
Since $\langle y_0,y_1^{\alpha_k},x_0  \rangle$ is 
primary, so is  $\langle y_0,y_1^{\alpha_k},L_{A_i} \rangle$.
\end{proof}

The {\it virtual dimension} of a
graded $S$-module $M$, denoted 
${\rm vdim}(M)$, is the minimum length
of a virtual resolution of $M$.  By
\cite[Proposition 2.5]{Virtual}, we always
have ${\rm vdim}(M) \geq {\rm codim}(M)$.  
We say $M$ is {\it virtually Cohen-Macaulay}
if ${\rm vdim}(M) = {\rm codim}(M)$.
Our main result implies the following corollary.

\begin{corollary}
Let $X$ be a set of points in $\mathbb{P}^1 \times 
\mathbb{P}^1$.  Then $S/I_X$ is virtually
Cohen-Macaulay.
\end{corollary}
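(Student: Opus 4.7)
The plan is to combine the main theorem of this section with the general lower bound on virtual dimension from \cite[Proposition 2.5]{Virtual}. Specifically, by definition $S/I_X$ is virtually Cohen-Macaulay if and only if $\mathrm{vdim}(S/I_X) = \mathrm{codim}(S/I_X)$, and since the cited proposition already gives $\mathrm{vdim}(S/I_X) \geq \mathrm{codim}(S/I_X)$, it suffices to exhibit a virtual resolution of $S/I_X$ whose length equals $\mathrm{codim}(S/I_X)$.

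First I would compute the codimension. Since $S = \Bbbk[x_0,x_1,y_0,y_1]$ has Krull dimension $4$, and by Lemma \ref{P1xP1facts}(2) we have $\text{K-dim}(S/I_X) = 2$, it follows that $\mathrm{codim}(S/I_X) = 4 - 2 = 2$.

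Next, I would invoke Theorem \ref{maintheorem2}: for any integer $a \geq |\pi_1(X)|-1$ (for instance $a = |\pi_1(X)|-1$ itself), the minimal free resolution of $S/(I_X \cap \langle x_0,x_1\rangle^a)$ is a virtual resolution of $S/I_X$ of length two. Hence $\mathrm{vdim}(S/I_X) \leq 2$. Combined with the lower bound $\mathrm{vdim}(S/I_X) \geq \mathrm{codim}(S/I_X) = 2$, we conclude $\mathrm{vdim}(S/I_X) = 2 = \mathrm{codim}(S/I_X)$, which is exactly the statement that $S/I_X$ is virtually Cohen-Macaulay.

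There is essentially no obstacle here, as the corollary is an immediate bookkeeping consequence of Theorem \ref{maintheorem2} together with the dimension computation in Lemma \ref{P1xP1facts}(2); the only subtlety worth noting is that Theorem \ref{maintheorem2} applies to every set of points $X$ in $\mathbb{P}^1 \times \mathbb{P}^1$ with no genericity hypothesis, so the resulting conclusion is unconditional.
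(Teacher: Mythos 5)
Your proof is correct and follows essentially the same route as the paper: compute $\mathrm{codim}(S/I_X)=2$, apply Theorem~\ref{maintheorem2} to get $\mathrm{vdim}(S/I_X)\leq 2$, and combine with the lower bound from \cite[Proposition 2.5]{Virtual}. The only difference is that you make the codimension computation explicit via Lemma~\ref{P1xP1facts}(2), which the paper leaves implicit.
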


\begin{proof}
For any finite set of points
$X$ in $\mathbb{P}^1 \times \mathbb{P}^1$,
we have ${\rm codim}(S/I_X) = 2$.  The result now follows from Theorem \ref{maintheorem2}
which shows ${\rm vdim}(S/I_X) \leq 2$.
\end{proof}

\begin{remark}
The above fact was already observed in
\cite{Virtual} (in particular, see the comments
after \cite[Theorem 4.1]{Virtual}), although the terminology
of  virtually Cohen-Macaulay was not used.  It was also observed in \cite[Corollary 4.2]{Virtual} that the ring $S/(I_X \cap \langle x_0,x_1\rangle^a)$ is
Cohen-Macaulay for $a \gg 0$.  This result
can also be recovered from Theorem \ref{maintheorem2};  in fact, our proof shows
that one can take $a \geq |\pi_1(X)|-1$.
The virtually Cohen-Macaulay property is further explored in \cite{Berkesch}.
\end{remark}

We conclude with some examples which illustrate our results.   Our 
first example shows that the uniform bound of Theorem \ref{maintheorem2} is
optimal.

\begin{example}
Let $X$ be a set of six points with generic Hilbert function, as in 
Example \ref{runningexample}.  For this example, $\lvert \pi_1(X) \rvert -1 = 5$,
and thus, 
by Theorem \ref{maintheorem2},  the minimal graded free
resolution of $S/(I_X \cap \langle x_0,x_1 \rangle^a)$ has length two for
all $a \geq 5$.  A direct computation using a computer algebra system 
shows that $S/(I_X \cap \langle x_0,x_1 \rangle^a)$ has a minimal
graded free resolution of length three for
$0 \leq a < 5$.  For example, when $a=4$, 
Macaulay2 yields the minimal free resolution:
$$0 \rightarrow 
S \rightarrow S^6 \rightarrow S^6 \rightarrow S \rightarrow S/(I_X\cap\langle x_0,x_1\rangle^4) \rightarrow 0,$$
where we have dropped the bigraded shifts for readability.
Thus, the uniform bound for all points in 
$\mathbb{P}^1 \times \mathbb{P}^1$ of Theorem \ref{maintheorem2} cannot be improved.
\end{example}

For a specific set of points $X$ in $\mathbb{P}^1 \times 
\mathbb{P}^1$, however, it is possible that the smallest integer $a$ such that
$S/(I_X \cap \langle x_0,x_1 \rangle^a)$ has a minimal resolution of length two
may be strictly smaller than $|\pi_1(X)|-1$.  For example, if  $X$ has the property that $S/I_X$ is a Cohen-Macaulay ring (such $X$ are classified in \cite{FGM,GMR2,G,Adam}), then
$S/(I_X \cap \langle x_0,x_1 \rangle^a)$ has a minimal graded free resolution
of length two for all $a \geq 0$.  Below, we give another example of
this phenomenon.

\begin{example}\label{nonboundexample}
Let $P_1,P_2,P_3,P_4$ be four distinct points in $\mathbb{P}^1$ and let
$Q_1,Q_2,Q_3,Q_4$ be another collection of four distinct points in $\mathbb{P}^1$ (we
allow for the case that $P_i = Q_j$ for some $i$ and $j$).
Consider the following set of nine points in $\mathbb{P}^1 \times \mathbb{P}^1$:
$$X = \{P_1 \times Q_1, P_1 \times Q_2, P_1 \times Q_3, P_1\times Q_4,
P_2 \times Q_2, P_2 \times Q_4, P_3 \times Q_3, P_3 \times Q_4, P_4 \times Q_4\}.$$
We can visualize these points as follows:
\begin{center}
\setlength{\unitlength}{0.6mm}
\begin{picture}(44,60)(-13,-15)
\put(-2,1){\line(1,0){40}}
\put(-2,11){\line(1,0){40}}
\put(-2,21){\line(1,0){40}}
\put(-2,31){\line(1,0){40}}
\put(1,-2){\line(0,1){40}}
\put(11,-2){\line(0,1){40}}
\put(21,-2){\line(0,1){40}}
\put(31,-2){\line(0,1){40}}
\put(1,31){\circle*{2}}
\put(11,31){\circle*{2}}
\put(21,31){\circle*{2}}
\put(31,31){\circle*{2}}
\put(31,21){\circle*{2}}
\put(31,11){\circle*{2}}
\put(11,21){\circle*{2}}
\put(21,11){\circle*{2}}
\put(31,1){\circle*{2}}
\put(-11,31){$P_1$}
\put(-11,21){$P_2$}
\put(-11,11){$P_3$}
\put(-11,1){$P_4$}
\put(-1,-11){$Q_1$}
\put(10,-11){$Q_2$}
\put(20,-11){$Q_3$}
\put(30,-11){$Q_4$}
\end{picture}
\end{center}
Consequently, $|\pi(X)|-1 =4-1=3$.

On the other hand,  using Macaulay2, we find that the smallest $a$ for which 
$S/(I_X \cap \langle x_0,x_1 \rangle^a)$ has minimal resolution of 
length two is $a=2 < |\pi_1(X)|-1$.  
\end{example}

We round out this paper with some observations and questions.  

\begin{remark} It is
well-known that for a set of points in $Z$ in $\mathbb{P}^1$, the
Castelnuovo-Mumford regularity of $R/I_Z$  (where $I_Z$ is the defining
ideal of $Z$ in $R = \Bbbk[x_0,x_1]$) is given by 
${\rm reg}(R/I_Z) = |Z|-1$.  If we combine this fact with Theorem
\ref{maintheorem2}, we can rewrite the bound in terms of the regularity
of the set of points in the projection, i.e., $a \geq {\rm reg}(R/I_Z)$ where
$Z = \pi_1(X) \subseteq \mathbb{P}^1$.  This suggests that one 
way to generalize Theorem \ref{maintheorem2} to $\mathbb{P}^n \times
\mathbb{P}^m$ (or even $\mathbb{P}^{n_1} \times \cdots \times \mathbb{P}^{n_r}$)
is to consider the regularity of $R/I_Z$ where $Z = \pi_1(X) \subseteq 
\mathbb{P}^n$ (or the multigraded regularity in the case of more than two
projective spaces).
\end{remark}

\begin{remark}
Example \ref{nonboundexample} shows that for some specific configurations of points
in $\mathbb{P}^1 \times \mathbb{P}^1$, the bound in Theorem \ref{maintheorem2} can be improved. 
For example, if we know that the set of 
points has the property that $S/I_X$ is a Cohen-Macaulay ring, then 
we can take $a=0$.  Moreover, these points can be classified geometrically.  The 
points in Example \ref{nonboundexample} do not have the Cohen-Macaulay property, but
they are not as ``sparse'' as a set of points with generic Hilbert function.    It would be interesting to determine what conditions on the geometry of the points allow
us to improve our bounds on the existence of short virtual resolutions.
\end{remark}

\end{document}